\pgfplotsset{compat=1.18}
\begin{document}
	\newtheorem{theorem}{Theorem}[section]
	\newtheorem{prop}[theorem]{Proposition}
	\newtheorem{lemma}[theorem]{Lemma}
	\newtheorem{cor}[theorem]{Corollary}
	\newtheorem{qn}[theorem]{Question}
	\theoremstyle{remark}
	\theoremstyle{definition}
	
	\newtheorem{prob}[theorem]{Problem}
	\newtheorem{defn}[theorem]{Definition}
	\newtheorem{notation}[theorem]{Notation}
	\newtheorem{fact}[theorem]{Fact}
	\newtheorem{conj}[theorem]{Conjecture}
	\newtheorem{claim}[theorem]{Claim}
	\newtheorem{example}[theorem]{Example}
	\newtheorem{rem}[theorem]{Remark}
	\newtheorem{assumption}[theorem]{Assumption}
	\newtheorem{scholium}[theorem]{Scholium}
	
	\newtheorem{conv}[theorem]{Notation and Convention}
	
	\newcommand{\HAT}{\widehat}
	\newcommand{\map}{\rightarrow}
	\newcommand{\C}{\mathcal C}
	\newcommand\AAA{{\mathcal A}}
	\newcommand{\SSS}{\mathfrak{S}}
	\newcommand{\bdry}{\partial(G,\mathfrak{S})}
\newcommand{\XX}{\mathcal{X}}
\newcommand{\CC}{\mathcal{C}}
\newcommand{\PP}{\mathcal{P}}
\newcommand{\BB}{\mathcal{B}}
\newcommand{\YY}{\mathcal{Y}}
\newcommand{\mybar}[1]{\smash{\rlap{$\overline{\phantom{#1}}$}}#1}
	\title[On connectedness of the boundary of HHS]{On the connectedness of the boundary of hierarchically hyperbolic spaces}

	\author{Ravi Tomar}
	\email{ravitomar547@gmail.com}
	\address{Beijing International Center for Mathematical Research, Peking University, No. 5 Yiheyuan Road Haidian District, Beijing, P.R.China 100871}
	

	\subjclass[2020]{20F65, 20F67 }

	\keywords{}
	
	
	\begin{abstract} 
	We prove that, under a mild assumption, any metrizable compactification of a one-ended proper geodesic metric space is connected. As a consequence, we deduce that the boundary, introduced by Durham--Hagen--Sisto, of a one-ended hierarchically hyperbolic space is connected. Moreover, we prove that the connectedness of the boundary of a hierarchically hyperbolic group is equivalent to the one-endedness of the group. 
	As an application, we show that if, for $n\geq 2$, $G_1=A_1\ast\dots\ast A_n$ and $G_2=B_1\ast\dots\ast B_n$ are free products of one-ended hierarchically hyperbolic groups, then the boundary of $G_1$ is homeomorphic to the boundary of $G_2$ if and only if the boundary of $A_i$ is homeomorphic to the boundary of $B_i$ for $1\leq i\leq n$.
	\end{abstract}
	\maketitle
	\section{Introduction}
	Motivated by the seminal work of Masur--Minsky \cite{masur-minsky-hyp-cc,masur-minsky-ccii}, Behrstock--Hagen--Sisto introduced the notion of a {\em hierarchically hyperbolic} space and group \cite{BHSI}. This provides a common framework to study mapping class groups and cubical groups.
	In \cite{DHS-boundary}, Durham--Hagen--Sisto introduced a boundary of a hierarchically hyperbolic group that coincides with the Gromov boundary when the group is hyperbolic. This boundary also gives a compactification of the group, and is called the hierarchical boundary of a hierarchically hyperbolic group. In \cite{ABR-thicknesshierarchically}, Abbott--Behrstock--Russell showed that if a hierarchically hyperbolic group $G$ is hyperbolic relative to a collection of subgroups, then the Bowditch boundary of $G$ is a quotient of the hierarchical boundary of $G$. This fact is also crucial to prove our main result. 
	
	It is well known that the Gromov boundary of a hyperbolic group is connected if and only if it is one-ended. The same is true for the Bowditch boundary of a relatively hyperbolic group \cite[Theorem 10.1]{bowditch-relhyp}. So it is natural to look for the relationship between the hierarchical boundary of a hierarchically hyperbolic group and its ends. In \cite{ABD-largest-acyl-actions}, Abbott--Behrstock--Durham introduced a `maximized hierarchical structure' for a given hierarchically hyperbolic space. Using this maximalization, Abbott--Behrstock--Russell \cite[Corollary 5.6]{ABR-thicknesshierarchically} proved that if the hyperbolic space associated to the maximal nested element in the maximized hierarchical structure of a hierarchically hyperbolic group $G$ is one-ended, then the hierarchical boundary with respect to any hierarchical structure on $G$ is connected. One of our main aims in this note is to directly prove that the hierarchical boundary of a one-ended hierarchically hyperbolic space is connected. However, from the proof and under a mild assumption, we see that this holds for any compactification of a proper geodesic metric space. Let $X$ be a proper geodesic metric space and $\partial X$ be a set such that $\overline{X}:=X\cup\partial X$ is a compactification of $X$, i.e. the inclusion $X\to\overline{X}$ is a topological embedding such that its image is a dense and open subset of $\overline{X}$. We further assume that the topology on $\overline{X}$ is metrizable.
    
    \begin{defn}\label{definition-weakly-visible}
    We call $\partial X$ a {\em weakly visible boundary of $X$} if the following holds:

    Suppose $\{x_n\}$ and $\{y_n\}$ are uniformly bounded sequences in $X$. If $\{x_n\}$ is converging to $\xi\in\partial X$ then $\{y_n\}$ is converging to $\xi.$
    \end{defn}
    
The Gromov boundary of a proper hyperbolic space, the Bowditch boundary of a proper relatively hyperbolic space, and the hierarchical boundary of a proper hierarchically hyperbolic space are weakly visible. In this note, the following is our first main result.
\vspace{.2cm}

{\bf Theorem 1} (Theorem \ref{main-theorem-1}){\em A weakly visible boundary of a one-ended proper geodesic metric space is connected.
}
\vspace{.2cm}

In \cite{hamentadt-z-structure}, Hamenst\"{a}dt introduced a $Z$-boundary of the mapping class group of a surface of finite type. In Lemma \ref{lemma-z-boundary}, we show that this $Z$-boundary is weakly visible, and hence by Theorem 1, it is connected (Proposition \ref{proposition-z-boundary-connected}). As an application of Theorem 1 and \cite[Theorem 1.3]{ABR-structure-invariant}, we obtain the following, and answer a question that appears in \cite{ABR-problem-list}:
	\vspace{.2cm}

{\bf Theorem 2} (Theorem \ref{main-theorem-2}). {\em Let $G$ be a hierarchically hyperbolic group. Then, the hierarchical boundary of $G$ is connected if and only if $G$ is one-ended.}
		
\vspace{.2cm}
		
For the definition of a hierarchically hyperbolic group and its boundary, one is referred to Section \ref{2}. In \cite[p. 3672]{DHS-boundary}, the authors conjectured Theorem 2. Here, we prove their conjecture. An application of Theorem 2 implies that the hierarchical boundary of the mapping class group of a connected orientable surface of finite type is connected, see Corollary \ref{corollary-mcg-connected} for the precise statement.
	\vspace{.2cm}
	
In \cite{martin-swiat}, the authors proved that the topology of the Gromov boundary of a free product of hyperbolic groups is uniquely determined by the topology of the Gromov boundaries of the free factors. Zbinden \cite{zbiden-morse}, Tomar \cite{tomar-homeo-rel}, and Chakraborty--Tomar \cite{subhojit-tomar-homeo-floyd} proved similar results for the Morse boundary, Bowditch boundary, and Floyd boundary of a free product of groups, respectively. We also prove a result of the same flavour for hierarchical boundary (Theorem \ref{homeo-free-product}). As an application of this result, we prove the following:
	
	\vspace{.2cm}
	
	{\bf Theorem 3} (Theorem \ref{main-application-1}.) {\em For $n\geq 2$, suppose $G_1=A_1\ast\dots\ast A_n$ and $G_2=B_1\ast\dots\ast B_n$, where $A_i$ and $B_i$ are one-ended hierarchically hyperbolic groups for all $i$. Suppose $G_1$ and $G_2$ have hierarchical structures as described in Subsection \ref{4.2}. Then, the hierarchical boundary of $G_1$ is homeomorphic to the hierarchical boundary of $G_2$ if and only if the hierarchical boundary of $A_i$ is homeomorphic to the hierarchical boundary of $B_i$ for all $1\leq i\leq n$.}
	
	\begin{rem}
		It is possible to give more than one hierarchical structure on a given group. However, it remains an open question whether different hierarchical structures on the same group yield homeomorphic hierarchical boundaries \cite[Question 1]{DHS-boundary}. In this paper, for one-ended groups, we give equivalent conditions for this question using free products (see Corollary \ref{main-cor}). Throughout the paper, for free products of hierarchically hyperbolic groups, we take the hierarchical structure as described in Subsection \ref{4.2}. 
	\end{rem}

\section{Background}\label{2}
In this section, we collect the necessary definitions and results. The definition of {\em hierarchically hyperbolic space (HHS)} is rather technical, and we refer the reader to \cite[Definition 1.2]{BHSII} for a complete account. Roughly, an HHS is an $E$-quasigeodesic metric space with an index set $\SSS$, with some extra data. We include some axioms for being an HHS that are relevant to us. Let $E>0$ and $\XX$ be a $E$-quasigeodesic metric space. Let $\{\CC W:W\in \SSS\}$ be a collection of $E$-hyperbolic spaces.

{\bf Projections.} For each $U\in\SSS$, there exists an $E$-coarsely Lipschitz $E$-coarse map $\pi_U:\XX\to \CC U$ such that $\pi_U(\XX)$ is $E$-quasiconvex in $\CC U$.

{\bf Nesting.} If $\SSS\neq\phi$, then $\SSS$ is equipped with a partial order $\sqsubseteq$ and it has a unique $\sqsubseteq$-maximal element. If $U,V\in \SSS$ and $U\sqsubseteq V$, then we say that $U$ is {\em nested} in $V$. Moreover, for all $U,V\in\SSS$ with $U \sqsubsetneq V$ there is a specified subset $\rho_{V}^U\subset \CC V$ such that Diam$_{\CC V}(\rho_{V}^U)\leq E$. Also, there is a {\em projection} $\rho_{U}^V:\CC V\to \CC U$

{\bf Orthogonality.} $\SSS$ has a symmetric and antireflexive relation called {\em orthogonality}. We write $U\perp V$ when $U$ and $V$ are orthogonal.

{\bf Transversality.} If $U,V\in\SSS$ are not orthogonal and neither is nested in the other, then we say $U$ and $V$ are {\em transverse}, denoted $U\pitchfork V$. Moreover, for all $U,V\in\SSS$ with $U\pitchfork V$, there are non-empty sets $\rho_{U}^V\subseteq \CC U$ and $\rho_{V}^U\subseteq \CC V$ each of diameter at most $E$. 

{\bf Bounded geodesic image.} For all $U,V\in\SSS$ such that $U\sqsubsetneq V$, and all geodesic $\alpha$ in $\CC V$, either Diam$_{\CC U}(\rho_{U}^V(\alpha))\leq E$ or $N_E(\rho_{V}^U)\cap\alpha\neq\emptyset$.

We use $\SSS$ to denote the hierarchically hyperbolic space structure, including the index set $\SSS$,
spaces $\{\CC W: W\in \SSS\}$, projections $\{\pi_W:W\in \SSS\}$, and relations $\sqsubseteq, \perp, \pitchfork$. A quasigeodesic metric space $\XX$ is said to be {\em hierarchically hyperbolic space with constant $E$} if there exists a
hierarchically hyperbolic structure on $\XX$ with constant $E$. The pair $(\XX,\SSS)$ denotes a hierarchically
hyperbolic space equipped with the specific HHS structure $\SSS$.

A {\em hierarchically hyperbolic group (HHG)} is a finitely generated group $G$ that acts on an HHS $(\XX,\mathfrak{S})$ such that
\begin{enumerate}
	\item the action of $G$ on $\XX$ is geometric,
	\item  $G$ acts on $\SSS$ by a $\sqsubseteq$-,$\perp$-, and $\pitchfork$-preserving bijection, and $\mathfrak{S}$ has finitely many $G$-orbits,
	\item the action is compatible with the HHS structure on $\XX$ \cite[p. 483]{petyt-spriano}.
\end{enumerate} 
 
For a precise definition, see \cite[Definition 1.21]{BHSII}. In this case, we say that $\SSS$ is an HHG structure for the group $G$ and use the pair $(G,\mathfrak{S})$ to denote the hierarchically hyperbolic group $G$ equipped with the specific hierarchically hyperbolic group structure $\SSS$. From condition (1) in the definition of an HHG, it follows that $G$ is quasiisometric to $X$. From here, one can give a hierarchical structure on the Cayley graph of $G$, and the left action of $G$ on the Cayley graph also satisfies all the conditions for being an HHG. Thus, to define an HHG, one can use a Cayley graph of $G$ itself. Also, it is easy to show that the definition of HHG does not depend on the choice of the Cayley graph. 

{\bf Hierarchical boundary.} In \cite{DHS-boundary}, the authors introduced the notion of a boundary of an HHS. From \cite[Section 2]{DHS-boundary}, we recall the definition of the hierarchical boundary and its topology. For $S\in \SSS$, $\partial\CC S$ denote the Gromov boundary \cite{gromov-hypgps} of $\CC S$.
\begin{defn}
	Let $(\XX,\SSS)$ be an HHS. A subset $\overline{S}\subset\SSS$ is said to be a {\em support set} if $S_i\perp S_j$ for all $S_i,S_j\in \overline{S}$. Given a support set $\overline{S}$, the {\em boundary point with support $\overline{S}$} is a formal sum $p=\sum_{S\in\overline{S}} a_S^pp_S$, where $p_S\in\partial\CC S$, $a_S>0$, and $\sum_{S\in\overline{S}}a_S^p=1$. By \cite[Lemma 2.1]{BHSII}, such sums are necessarily finite. We denote the support of the boundary point $p$ by Supp($p$). The {\em hierarchical boundary} $\partial(\XX,\SSS)$ of $(\XX,\SSS)$ is the set of all boundary points.
\end{defn}
When the specific HHS structure is clear, we write $\partial\XX$ instead of $\partial(\XX,\SSS)$. For an HHG $(G,\SSS)$, let $(\XX,\SSS)$ be a corresponding HHS. Then, the hierarchical boundary $\partial(G,\SSS)$ of $(G,\SSS)$ is defined to the hierarchical boundary of $(\XX,\SSS)$.

{\bf Topology on $\partial\XX$.} Before defining the topology, we need the notion of a remote point and boundary projection.

\begin{defn}
	[Remote point] A point $q\in\partial\XX$ is called a {\em remote point} with respect to a support set $\overline{S}$ if Supp($q$)$\cap\overline{S}=\emptyset$ and, for all $S\in\overline{S}$, there exists $T_S\in$ Supp($q$) such that $S\notperp T_S$. The set of all remote points of $\partial \XX$ with respect to $\overline{S}$ is denoted by $\partial^{rem}_{\overline{S}}(\XX)$.
\end{defn}
For a support set $\overline{S}$, we denote $\overline{S}$$^{\perp}$ the set of all $U\in\SSS$ such that $U\perp V$ for all $V\in\overline{S}$. Given a support set $\overline{S}$ and $q\in\partial^{rem}_{\overline{S}}\XX$, let $\overline{S}_q$ denote the union of $\overline{S}$ and the set of all $U\in\overline{S}$$^\perp$ such that $U$ is not orthogonal to some $T_U\in$ Supp($q$).
\begin{defn}
	[Boundary projection]  Define a {\em boundary projection} $\partial\pi_{\overline{S}}(q)\in \prod_{S\in \overline{S}_q}\partial\CC S$
	as follows. Let $q=\sum_{T\in\text{Supp}(q)}a_T^q q_T$. For each $S\in \overline{S}_q$ , let $T_S\in\text{Supp}(q)$ be chosen so that $S$ and $T_S$ are not orthogonal. Define the
$S$-coordinate $(\partial\pi_{\overline{S}}(q))_S$ of $\partial\pi_{\overline{S}}(q)$ as follows:
\begin{enumerate}
	\item  If $T_S\sqsubseteq S$ or $T_S\pitchfork S$, then $(\partial\pi_{\overline{S}}(q))_S=\rho_{S}^{T_S}$.
	\item  Otherwise, $S\sqsubseteq T_S$. Choose a $(1,20E)$–quasigeodesic ray $\gamma$ in $\CC T_S$ joining $\rho_{T_S}^S$ to $q_{T_S}$. By the bounded geodesic image axiom, there exists $x\in\gamma$
	such that $\rho_{S}^{T_S}$
	is coarsely constant on the subray of $\gamma$ beginning at $x$. Let
	$(\partial\pi_{\overline{S}}(q))_S=\rho_{S}^{T_S}(x)$.
\end{enumerate}
\end{defn}
The map $\partial\pi_{\overline{S}}$ is coarsely independent of the choice of $\{T_S\}_{S\in\overline{S}}$ (see \cite[Lemma 2.1]{DHS-boundary}). Now, we are ready to define the topology on $\partial\XX$.

Fix a base point $x_0\in\XX$. We define a neighborhood basis for each point $p=\sum_{S\in\overline{S}}a_S^pp_S$, where $p_S\in\partial\CC S$ for each $S\in\text{Supp}(p)=\overline{S}$. For each $S\in\SSS$, choose a neighborhood $U_S$ of $p_S$ in $\CC S\cup\partial\CC S$, and choose $\epsilon>0$. Now, we define the following three subsets of $\partial \XX$ which contribute in the definition of a neighborhood around $p$.
\begin{defn}
	[Remote part] The {\em remote part} $\mathcal{B}^{rem}_{\{U_S\},\epsilon}(p)$ is the set of all $q\in\partial \XX$ such that the following hold:
	\begin{enumerate}
		\item For all $S\in\overline{S}$, $(\partial\pi_{\overline{S}}(q))_S\in U_S$.
		\item $\sum_{T\in\overline{S}^{\perp}}a_T^q<\epsilon$.
		\item For all $S\in \overline{S}_q$ and $S'\in\overline{S}$, $\bigg|\dfrac{d_S(x_0,(\partial\pi_{\overline{S}}(q))_S)}{d_{S'}(x_0,(\partial\pi_{\overline{S}}(q))_{S'})}-\dfrac{a_S^p}{a_{S'}^p}\bigg|<\epsilon$.
	\end{enumerate} 
\end{defn}
\begin{defn}
	[Non-remote part] The {\em non-remote part} $\mathcal{B}^{non}_{\{U_S\},\epsilon}(p)$ is the set of points $q=\sum_{T\in\text{Supp}(q)}a_T^q q_T\in \partial \XX-\partial^{rem}_{\overline{S}}\XX$ such that the following hold, where $A=\overline{S}\cap\text{Supp}(q)$:
	\begin{enumerate}
		\item For all $T\in A$, $q_T\in U_T$.
		\item For all $T\in A$, $|a_T^p-a_T^q|<\epsilon.$
		\item $\sum_{V\in\text{Supp}(q)- A}a_V^q<\epsilon$.
	\end{enumerate}
\end{defn}
\begin{defn}
	[Interior part] The {\em interior part} $\mathcal{B}^{int}_{\{U_S\},\epsilon}(p)$ is the points $x\in\XX$ such that the following conditions are satisfied:
	\begin{enumerate}
		\item For all $S\in\overline{S}$, $\pi_S(x)\in U_S$.
		\item For all $S,S'\in\overline{S}$, $\bigg|\dfrac{d_S(x_0,\pi_S(x))}{d_{S'}(x_0,\pi_{S'}(x)}-\dfrac{a_S^p}{a_{S'}^p}\bigg|<\epsilon.$
		\item For all $S\in\overline{S}$ and $T\in \overline{S}^{\perp}$, $\dfrac{d_T(x_0,x)}{d_S(x_0,x)}<\epsilon$.
	\end{enumerate}
\end{defn}
\begin{defn}
	[Topology on $\XX\cup\partial\XX$] For each $p\in\partial\XX$ with support $\overline{S}$, and $\{U_S\}_{S\in\overline{S}}$, $\epsilon>0$ as above, let $$B_{\{U_S\},\epsilon}(p):=\mathcal{B}^{rem}_{\{U_S\},\epsilon}(p)\cup \mathcal{B}^{non}_{\{U_S\},\epsilon}(p)\cup \mathcal{B}^{int}_{\{U_S\},\epsilon}(p).$$ We declare the set of all such $B_{\{U_S\},\epsilon}(p)$ to form a neighborhood basis around $p$. Also, we include the open subsets of $\XX$ in the topology of $\XX\cup\partial\XX$.
\end{defn}
In \cite[Remark 1.3]{hagen-metrizable}, Hagen clarified why this indeed forms a valid neighborhood basis. This topology does not depend on the choice of the base point $x_0$. 

\begin{theorem}\label{cpt-metrizablity-hhs-bdry}
	\textup{(\cite[Theorem 3.4]{DHS-boundary},\cite{hagen-metrizable})} If $\XX$ is proper, then $\overline{\XX}:=\XX\cup\partial X$ is a compact metrizable space. Moreover, $\XX$ is dense in $\overline{\XX}$.
\end{theorem}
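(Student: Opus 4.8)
The plan is to establish the three assertions — density of $\XX$, sequential compactness, and metrizability — in that order, since density feeds the other two arguments and the combinatorial structure of the neighborhood basis is the common engine throughout.

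\emph{Density.} Fix a boundary point $p=\sum_{S\in\overline S}a_S^p p_S$ and a basic neighborhood $B_{\{U_S\},\epsilon}(p)$; I would produce a point of $\XX$ in its interior part $\mathcal B^{int}_{\{U_S\},\epsilon}(p)$. For each $S\in\overline S$ choose a point $y_S$ far out along a $(1,20E)$-quasigeodesic ray in $\CC S$ limiting to $p_S$, with $d_S(x_0,y_S)$ large and proportional to $a_S^p$; for every other $U\in\SSS$ define the target coordinate by following the relative projections $\rho_U^{\,S}$ exactly as in the definition of the boundary projection, so that coordinates decay away from $\overline S$ and (coarsely) vanish on $\overline S^{\perp}$. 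The resulting tuple is consistent — the pairwise orthogonality of $\overline S$ removes all constraints among the large coordinates, while the $\rho$-based choices satisfy the consistency inequalities for nested and transverse pairs by the bounded geodesic image axiom — so the realization theorem for HHSs yields $x\in\XX$ with $\pi_S(x)$ uniformly close to $y_S$ for all $S$. Conditions (1) and (2) of the interior part then hold by construction, while (3) follows from the distance formula: the orthogonal-complement distances $d_T(x_0,x)$ remain bounded while the support distances $d_S(x_0,x)$ grow, so their ratio is forced below $\epsilon$. Hence $\XX\cap B_{\{U_S\},\epsilon}(p)\neq\emptyset$, and $\XX$ is dense.

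\emph{Compactness.} I would prove sequential compactness and then upgrade to compactness via metrizability. Given a sequence $\{z_n\}\subset\overline\XX$, first use \cite[Lemma 2.1]{BHSII} to bound the cardinality of all supports by the complexity of $(\XX,\SSS)$; passing to a subsequence I may assume this cardinality, together with the set of dominant coordinate domains of those $z_n$ lying in $\XX$, is constant. A diagonal extraction over these finitely many coordinates — using properness of $\XX$, which lets each relevant projection sequence accumulate in the compact space $\CC S\cup\partial\CC S$ (the closure of the quasiconvex set $\pi_S(\XX)$) — together with convergence of the coefficient vectors in the bounded-dimensional simplex, produces a candidate limit $z_\infty\in\overline\XX$. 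I would then verify $z_n\to z_\infty$ against the neighborhood basis, splitting into the cases where $z_n\in\XX$, where $z_n$ is remote with respect to $\mathrm{Supp}(z_\infty)$, and where it is non-remote, matching each to $\mathcal B^{int}$, $\mathcal B^{rem}$, and $\mathcal B^{non}$ respectively; once more the distance formula converts convergence of coordinates and coefficients into eventual membership in the basic neighborhood.

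\emph{Metrizability and the main obstacle.} With compactness and density in hand I would obtain metrizability from Urysohn's metrization theorem by checking that $\overline\XX$ is Hausdorff, regular, and second countable. Separation is routine: $\XX$ is open, so a point of $\XX$ is separated from any boundary point, and two distinct boundary points differ either in support, in some coordinate $p_S\in\partial\CC S$, or in some coefficient $a_S^p$, each of which is detected by disjoint basic neighborhoods. The delicate hypothesis is second countability, and this is exactly where I expect the main difficulty: the neighborhood basis interlaces a genuine topology on each factor $\CC S\cup\partial\CC S$ with the combinatorial simplex of coefficients and the ``$\overline S^{\perp}$ is small'' conditions, so producing a countable base requires that only countably many domains carry relevant boundary data and that each $\CC S\cup\partial\CC S$ is itself second countable. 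Rather than verifying these axioms piecemeal, the efficient route — following Hagen — is to exhibit an explicit metric assembled as a suitably weighted and normalized sum over domains of the coordinate distances $d_S$ and visual-type metrics on the $\partial\CC S$, and to prove that it induces precisely the neighborhood basis above. Checking that the three heterogeneous parts (remote, non-remote, interior) genuinely glue into a single metrizable topology — not merely that each is individually plausible — is the crux, and is exactly the point clarified in \cite[Remark 1.3]{hagen-metrizable}.
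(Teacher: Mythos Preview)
The paper does not give its own proof of this statement: it is quoted verbatim from \cite[Theorem~3.4]{DHS-boundary} and \cite{hagen-metrizable}, with no argument supplied beyond the citation. So there is nothing in the paper to compare your proposal against; you are in effect sketching the proofs of the cited references. Your identification of metrizability---and specifically second countability---as the crux is accurate, and that is precisely the gap that Hagen's note \cite{hagen-metrizable} was written to fill after the original DHS argument.

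That said, your compactness sketch has a genuine soft spot. The step ``passing to a subsequence I may assume \ldots\ the set of dominant coordinate domains of those $z_n$ lying in $\XX$ is constant'' does not work as stated: the index set $\SSS$ is typically infinite, so even though the number of dominant domains for each $z_n$ is bounded by the complexity, the actual domains can vary over infinitely many possibilities as $n$ varies, and no pigeonhole argument fixes them. The proof in \cite{DHS-boundary} does not attempt to stabilize the support in advance; instead it exploits the partial order on $\SSS$---beginning at the unique $\sqsubseteq$-maximal domain, where one can genuinely extract a convergent subsequence in $\CC S\cup\partial\CC S$, and then descending through the hierarchy using finite complexity and the consistency axioms---to locate the support of the limit point dynamically. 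Your density argument via the realization theorem, on the other hand, is essentially the approach taken in the literature.
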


Let $(G,\SSS)$ be an HHG, and let $\partial(G,\SSS)$ be its hierarchical boundary. Define $$\overline{G}:=\Gamma\cup\bdry$$ where $\Gamma$ denotes a Cayley graph of $G$ with respect to a finite generating set. By the previous theorem, $\overline{G}$ is a compact metrizable space. We denote this metric by $d_{\triangle}$. Define a map $\pi:G\to \partial(G,\SSS)$ as 
\begin{equation}\label{equation}
	\pi(g) = \xi, \text{ where } \xi \in \partial(G, \SSS) \text{ such that } d_{\triangle}(g, \xi) = d_{\triangle}(g, \partial(G, \SSS))
\end{equation}

Note that for (1), such a $\xi$ exists as $\partial(G,\SSS)$ is compact, but (2) $\xi$ may not be unique; however, uniqueness is not required for our purposes.
\begin{lemma}\label{dense-lemma}
	
	$\pi(G)$ is dense in $\partial(G,\SSS)$.
\end{lemma}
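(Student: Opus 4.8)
The plan is to prove directly that every $\xi\in\partial(G,\SSS)$ lies in the closure of $\pi(G)$. Since $\overline{G}$ is metrizable, hence first countable, it suffices to produce a sequence $g_n\in G$ with $\pi(g_n)\to\xi$; the density of $\pi(G)$ in $\partial(G,\SSS)$ follows at once.

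First I would replace the HHS $\XX$ by the Cayley graph $\Gamma$: as recalled in Section \ref{2}, $\Gamma$ carries a hierarchical structure with the same hierarchical boundary $\partial(G,\SSS)$, and, $G$ being finitely generated, $\Gamma$ is a proper geodesic metric space. By Theorem \ref{cpt-metrizablity-hhs-bdry}, $\Gamma$ is dense in $\overline{G}=\Gamma\cup\partial(G,\SSS)$, so I may fix a sequence $z_n\in\Gamma$ converging to $\xi$. The catch is that the $z_n$ need not be vertices, that is, need not belong to $G$; a priori they may lie in the interiors of edges of $\Gamma$.

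To remedy this, for each $n$ I would choose a vertex $g_n\in G$ on the closed edge of $\Gamma$ containing $z_n$ (taking $g_n=z_n$ when $z_n\in G$). Since every edge of $\Gamma$ has length one, the graph metric satisfies $d_\Gamma(z_n,g_n)\le 1$ for all $n$, so $\{z_n\}$ and $\{g_n\}$ are uniformly bounded sequences in $\Gamma$ in the sense of Definition \ref{definition-weakly-visible}. The hierarchical boundary of the proper HHS $\Gamma$ is weakly visible, so from $z_n\to\xi\in\partial(G,\SSS)$ weak visibility gives $g_n\to\xi$ --- now with $g_n\in G$. This replacement step, upgrading the approximating points from arbitrary points of $\Gamma$ to honest group elements, is the only part of the proof that is not pure bookkeeping, and it is precisely the situation that the weak visibility hypothesis is built to handle.

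Finally, by the definition of $\pi$ in \eqref{equation} and the triangle inequality for $d_\triangle$,
\[
d_\triangle(\pi(g_n),\xi)\le d_\triangle(\pi(g_n),g_n)+d_\triangle(g_n,\xi)=d_\triangle(g_n,\partial(G,\SSS))+d_\triangle(g_n,\xi)\le 2\,d_\triangle(g_n,\xi),
\]
where the last inequality holds because $\xi\in\partial(G,\SSS)$. As $d_\triangle(g_n,\xi)\to 0$, we conclude $\pi(g_n)\to\xi$ with $\pi(g_n)\in\pi(G)$, which is exactly what we wanted.
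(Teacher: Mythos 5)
Your proof is correct and follows essentially the same route as the paper's: approximate $\xi$ by group elements $g_n\to\xi$ and then use the minimizing property of $\pi$ from \eqref{equation} together with the triangle inequality to get $d_{\triangle}(\pi(g_n),\xi)\leq 2\,d_{\triangle}(g_n,\xi)\to 0$. The only difference is that you take care to justify, via weak visibility, why the vertex set $G$ (rather than all of $\Gamma$) is dense in $\overline{G}$ --- a point the paper's proof simply asserts --- which is a welcome but minor refinement of the same argument.
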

\begin{proof}
 Let $\xi\in\partial G$. Since $G$ is dense in $\overline{G}$, there exists a sequence $\{g_i\}\subset G$ such that $\lim_{i\to\infty}d_{\triangle}(g_i,\xi)=0$. Thus, by (1), $\lim_{i\to\infty}d_{\triangle}(\pi(g_i),\xi)=0$. Hence, $\pi(G)$ is dense in $\partial (G,\SSS)$.
\end{proof}
We finish this section by recording the following fact that is relevant to us. For relative hyperbolicity and Bowditch boundary, one is referred to \cite{bowditch-relhyp}.

\begin{theorem}\label{bowditch-boudary-hhs}
	\textup{(\cite[Theorem 1.3]{ABR-thicknesshierarchically})} Let $G$ be an HHG that is hyperbolic relative to a finite collection of subgroups $\mathcal{P}$. Then, the Bowditch boundary of $G$ with respect to $\mathcal{P}$ is the quotient of hierarchical boundary of $G$ obtained by collapsing the limit set of each coset of a parabolic subgroup to a point.
\end{theorem}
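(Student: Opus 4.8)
The plan is to build an explicit $G$-equivariant surjection $\Phi$ from the hierarchical boundary onto the Bowditch boundary whose point-preimages are exactly the sets we wish to collapse, and then to upgrade it to a homeomorphism of the quotient using the compactness of $\partial(G,\SSS)$ (Theorem \ref{cpt-metrizablity-hhs-bdry}). First I would fix an HHS structure $\SSS$ adapted to the relative hyperbolicity: writing $S$ for the $\sqsubseteq$-maximal domain, its hyperbolic space $\CC S$ is $G$-equivariantly quasi-isometric to the coned-off Cayley graph $\HAT{\Gamma}$ of $G$ relative to $\PP$, and for each coset $gP$ of a peripheral subgroup $P\in\PP$ the domains nested in $S$ that ``see'' $gP$ form a sub-HHS structure $\SSS_{gP}$ whose associated boundary is the hierarchical boundary of $gP$. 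The key structural input, coming from the interaction of relative hyperbolicity with the hierarchy, is that domains belonging to two distinct peripheral cosets are always transverse, never orthogonal.

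The first genuine step is to read off the boundary points from this structure. Since a support set consists of pairwise orthogonal domains, and since $S$ is orthogonal to nothing while domains from distinct cosets are transverse, every $p\in\partial(G,\SSS)$ falls into exactly one of two classes: either $\mathrm{Supp}(p)=\{S\}$, so that $p\in\partial\CC S$, or $\mathrm{Supp}(p)\subseteq\SSS_{gP}$ for a unique peripheral coset $gP$, so that $p$ lies in the hierarchical boundary $\partial(gP,\SSS_{gP})$. I would then define $\Phi$ on these two classes: on $\partial\CC S\cong\partial\HAT{\Gamma}$ use the standard identification of the boundary of the coned-off graph with the conical limit points of $\partial_B(G,\PP)$; and send every point supported in $\SSS_{gP}$ to the unique bounded parabolic point $\xi_{gP}\in\partial_B(G,\PP)$ stabilized by $gP$. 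By construction $\Phi$ is $G$-equivariant, its restriction to $\partial\CC S$ is injective with image the conical points, and its fibers over parabolic points are precisely the limit sets $\partial(gP,\SSS_{gP})$; surjectivity is then immediate since $\partial_B(G,\PP)$ consists of exactly the conical and bounded parabolic points.

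It remains to show $\Phi$ is continuous, after which the proof concludes formally: a continuous surjection from the compact space $\partial(G,\SSS)$ (Theorem \ref{cpt-metrizablity-hhs-bdry}) onto the Hausdorff space $\partial_B(G,\PP)$ that is constant on the fibers of the collapsing relation $\sim$ descends to a continuous bijection $\partial(G,\SSS)/\!\sim\,\to\partial_B(G,\PP)$, and a continuous bijection from a compact space to a Hausdorff space is a homeomorphism. Continuity I would verify sequentially using the Durham--Hagen--Sisto neighborhood bases: if $p_n\to p$ with $p$ of conical type, the defining conditions on the remote and interior parts force the $\CC S$-projections of $p_n$ to converge to $p$ in $\partial\HAT\Gamma$, whence $\Phi(p_n)\to\Phi(p)$; if $p$ is supported in $gP$, the non-remote and remote conditions force the relevant supports and weights of $p_n$ to concentrate on $\SSS_{gP}$, which geometrically means the approximating points penetrate the coset $gP$ arbitrarily deeply and hence converge to $\xi_{gP}$ in the Bowditch topology.

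The main obstacle is exactly this continuity at the parabolic points, where the two topologies are described in the most dissimilar ways: the hierarchical topology is phrased through projections to all the $\CC W$ together with the delicate remote/non-remote/interior trichotomy, whereas the Bowditch topology near $\xi_{gP}$ is governed by the dynamics of the convergence action, equivalently by depth in the Groves--Manning cusped space. Bridging them requires translating ``the support and weight of $p_n$ concentrate in $\SSS_{gP}$'' into ``the points go deep into $gP$,'' which is where the bounded geodesic image axiom is essential: it guarantees that a large projection to some domain of $gP$ is accompanied by the $\CC S$-projection being pinned near the cone point $\rho^{W_{gP}}_S$, i.e. near $\xi_{gP}$. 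The conical case, by contrast, should be comparatively routine, since there both boundaries restrict to $\partial\CC S$.
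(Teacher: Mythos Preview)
The paper does not contain a proof of this statement: Theorem~\ref{bowditch-boudary-hhs} is quoted from \cite[Theorem 1.3]{ABR-thicknesshierarchically} and recorded in the background section with the phrase ``We finish this section by recording the following fact that is relevant to us.'' There is therefore no proof in the present paper to compare your proposal against.

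That said, your outline is a plausible sketch of how the Abbott--Behrstock--Russell argument goes: the dichotomy ``support equals the maximal domain'' versus ``support contained in the sub-HHS of a single peripheral coset'' is exactly the right decomposition, the compact-to-Hausdorff argument is the standard way to promote a continuous surjection with the correct fibers to a homeomorphism of the quotient, and you correctly identify continuity at parabolic points as the delicate step. One caveat: your claim that ``domains belonging to two distinct peripheral cosets are always transverse, never orthogonal'' is not a consequence of relative hyperbolicity alone for an arbitrary HHG structure on $G$; in the ABR setting one works with a specific relatively hyperbolic HHS structure (as in \cite[Theorem 9.1]{BHSII}, the same one used in Subsection~\ref{4.2} here) where this transversality is built in. If you were trying to prove the result for \emph{every} HHG structure on $G$, you would need the additional structure-invariance input from \cite{ABR-structure-invariant}. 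But again, none of this is carried out in the paper under review; the theorem is simply imported.
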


\section{Proof of Theorem \ref{main-theorem-1}}\label{3}
 This section is devoted to proving our main result. 
 
 \begin{theorem}\label{main-theorem-1}
 A weakly visible boundary of a one-ended proper geodesic metric space is connected.
 \end{theorem}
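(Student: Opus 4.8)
The plan is to argue by contradiction, combining weak visibility (to control a ``nearest boundary point'' map near infinity) with one-endedness (to connect escaping sequences by paths that stay near infinity). Suppose $\partial X$ is disconnected. Since $\partial X$ is closed in the compact metrizable space $\overline{X}$, it is itself compact metrizable, so we may write $\partial X=A\sqcup B$ with $A,B$ nonempty, disjoint and closed; fixing a metric $\rho$ on $\overline{X}$ inducing its topology, $\delta:=\rho(A,B)>0$ because $A$ and $B$ are disjoint compacta. Fix a basepoint $x_0\in X$ and, exactly as in the construction preceding Lemma \ref{dense-lemma}, a map $\pi\colon X\to\partial X$ sending each $x$ to a point of $\partial X$ that is $\rho$-nearest to it. The argument of Lemma \ref{dense-lemma} gives that $x_n\to\xi\in\partial X$ in $\overline{X}$ implies $\pi(x_n)\to\xi$, and, since $X$ is proper, that any sequence in $X$ converging to a point of $\partial X$ satisfies $d(x_0,x_n)\to\infty$ (a bounded subsequence would subconverge inside $X$, contradicting Hausdorffness of $\overline{X}$).

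The first and main step is to show that weak visibility forces $\pi$ to be ``uniformly continuous near infinity'': for every $\varepsilon>0$ there exists $N>0$ such that $\rho(\pi(x),\pi(y))<\varepsilon$ whenever $x,y\in X\setminus B(x_0,N)$ with $d(x,y)\le 1$. If this fails for some $\varepsilon$, then for each $n$ we get $x_n,y_n\in X\setminus B(x_0,n)$ with $d(x_n,y_n)\le 1$ but $\rho(\pi(x_n),\pi(y_n))\ge\varepsilon$; passing to a subsequence with $x_n\to\xi$ in $\overline{X}$, necessarily $\xi\in\partial X$ since $d(x_0,x_n)\to\infty$, and since $\{x_n\}$ and $\{y_n\}$ are uniformly bounded (they stay within distance $1$ of one another) weak visibility yields $y_n\to\xi$ as well. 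Then $\pi(x_n)\to\xi$ and $\pi(y_n)\to\xi$, contradicting $\rho(\pi(x_n),\pi(y_n))\ge\varepsilon$. Apply this with $\varepsilon=\delta/3$ to fix such an $N$.

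The second step uses one-endedness: a one-ended proper geodesic space has the property that for every $r$ there is $R\ge r$ so that any two points of $X\setminus B(x_0,R)$ are joined by a path inside $X\setminus B(x_0,r)$ (in a proper geodesic space the union of the bounded components of the complement of a metric ball is bounded, so one-endedness gives, for large $r$, a unique unbounded --- hence path-connected, since $X$ is locally path-connected --- component of $X\setminus B(x_0,r)$ that contains everything outside a sufficiently large ball). Apply this with $r=N$ to get $R\ge N$. Pick $\xi\in A$, $\eta\in B$ and sequences $a_n\to\xi$, $b_n\to\eta$ in $X$; for large $n$, $d(x_0,a_n),d(x_0,b_n)>R$, so there is a path $\gamma_n$ from $a_n$ to $b_n$ with image in $X\setminus B(x_0,N)$. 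Subdivide $\gamma_n$ into consecutive points $a_n=p^n_0,p^n_1,\dots,p^n_{k_n}=b_n$ with $d(p^n_j,p^n_{j+1})\le 1$; all of them lie outside $B(x_0,N)$.

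Finally I extract the contradiction. For $n$ large, $\rho(\pi(a_n),\xi)<\delta/3$ and $\rho(\pi(b_n),\eta)<\delta/3$; since every point of $\partial X$ lies in $A$ or $B$ and $\rho(A,B)=\delta$, this forces $\pi(p^n_0)=\pi(a_n)\in A$ and $\pi(p^n_{k_n})=\pi(b_n)\in B$. By the first step, $\rho(\pi(p^n_j),\pi(p^n_{j+1}))<\delta/3$ for every $j$. Taking the least $j$ with $\pi(p^n_j)\in B$ (so $j\ge 1$ and $\pi(p^n_{j-1})\in A$) we get $\rho(\pi(p^n_{j-1}),\pi(p^n_j))\ge\rho(A,B)=\delta$, yet $\rho(\pi(p^n_{j-1}),\pi(p^n_j))<\delta/3$, a contradiction. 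Hence $\partial X$ is connected (the cases where $\partial X$ is empty or a single point being trivial). The hard part is the first step --- squeezing honest uniform control of $\pi$ out of the purely sequential hypothesis of weak visibility; the path-existence statement of the second step is standard for proper geodesic spaces, and the concluding ``chain'' argument is then elementary.
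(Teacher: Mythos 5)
Your proof is correct, and its engine is the same as the paper's: a nearest-point projection $\overline{\pi}\colon X\to\partial X$, one-endedness to produce far-out paths joining points whose projections land in the two different clopen pieces of $\partial X$, and weak visibility applied to pairs of points at distance at most $1$ escaping to infinity. The only real difference is in how the contradiction is extracted. The paper locates on each such path a single ``crossing pair'' $a_m\in\overline{\pi}^{-1}(V_1)$, $b_m\in\overline{\pi}^{-1}(V_2)$ with $d(a_m,b_m)\le 1$, passes to a convergent subsequence, and applies weak visibility once to force the common limit into both closed pieces; you instead first upgrade the sequential hypothesis to a uniform modulus of continuity for $\overline{\pi}$ near infinity (your Step 1, proved by the same subsequence-plus-weak-visibility contradiction) and then finish with a discrete chain argument against the gap $\delta=\rho(A,B)>0$. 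The two endgames are logically interchangeable, and your Step 1 carries the same content as the paper's final paragraph. One point in your favour: you invoke one-endedness only to produce \emph{paths} avoiding large balls, which is exactly what the definition of ends supplies and all that is actually used, whereas the paper asserts the existence of \emph{geodesics} $\gamma_m$ avoiding $K_m$ --- a literally stronger statement (false, for instance, for antipodal points in the Euclidean plane) that its argument does not in fact need.
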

 \begin{proof}
     Let $X$ be a proper geodesic metric space with weakly visible boundary $\partial X$, and $\overline{X}:=X\cup\partial X$ be a compactification of $X$. Let $d$ and $\overline{d}$ denote the metric on $X$ and $\overline{X}$, respectively. Define a map $\overline{\pi}:X\to\partial X$ as $$\overline{\pi}(x)=\xi, \text{ where } \xi\in\partial X \text{ such that } \overline{d}(x,\xi)=\overline{d}(x,\partial X).$$ Note that such a $\xi$ exists as $\partial X$ is compact, but $\xi$ may not be unique; however, uniqueness is not required for our purposes. We prove the theorem by contradiction. Let if possible $$\partial X=V_1\sqcup V_2$$ where $V_1$ and $V_2$ are non-empty disjoint open subsets of $\partial X$. Let $B_i=\overline{\pi}^{-1}(V_i)$ for $i=1,2$. Denote the closure of $B_i$ in $\overline{X}$ by cl$(B_i)$ (with respect to the topology induced by $\overline{d}$).

{\bf Claim:} $B_1$ and $B_2$ are non-empty, disjoint, and satisfy cl$(B_i)=B_i\cup V_i$ for $i=1,2$.

Let $\xi\in$ cl$(B_i)\setminus\{B_i\}$. Then, there exists a sequence $\{b_n\}\subset B_i$ such that $b_n\to \xi$ as $n\to\infty$. An easy application of triangle inequality shows that $\overline{\pi}(b_n)\to \xi$ as $n\to\infty$. This implies that cl$(B_i)\subseteq B_i \cup V_i$. For the converse, let $\xi \in V_1$. Since $X$ is dense in $\overline{X}$, let $\{x_n\}$ be a sequence in $X$ such that $x_n \to \xi$ (in the original topology of $\overline{X}$). As the topology induced by $\overline{d}$ on $\partial X$ is same as the original topology on $\partial X$, $\{x_n\}\to \xi$ with respect to the metric $\overline{d}$. Suppose there exists a subsequence of $\{x_{n}\}$ contained in $B_2$. Then, $\xi \in$ cl$(B_2)$, which in turn implies that $\xi \in V_2$. This gives a contradiction as $V_1$ and $V_2$ are disjoint. Hence, $\{x_n\}$ is eventually contained in $B_1$ and therefore, $\xi\in$ cl$(B_1)$. Similarly, one can show that $V_2\subseteq$ cl$(B_2)$. This also shows that $B_1$ and $B_2$ are non-empty disjoint subsets of $X$. Hence, the claim.

Since $V_1$ and $V_2$ are non-empty, let $\xi_1 \in V_1$ and $\xi_2 \in V_2$. As in the proof of the claim, there exist sequences $\{x_n\}$ and $\{y_n\}$ in $B_1$ and $B_2$, respectively, such that $x_n \to \xi_1$ and $y_n \to \xi_2$. Since closed $d$-balls in $X$ are compact, up to passing to subsequences, we can assume that $\{x_n\}$ and $\{y_n\}$ are unbounded in $X$. For every $m\in \mathbb{N}$, denote $K_m$ the closed $d$-metric ball of radius $m$ about a fixed base point $x_0\in X$. Since $X$ is one-ended, there exist subsequences $\{x_{n_m}\}$ and $\{y_{n_m}\}$ and a sequence of geodesics $\{\gamma_{m}\}$ joining $x_{n_m}$ and $y_{n_m}$ such that $\gamma_m$ is contained in $X\setminus K_m$. Let $[a_m,b_m]$ be a subpath of $\gamma_{m}$ such that $a_m\in B_1$ and $b_m\in B_2$ and $d(a_m,b_m)\leq 1$ (such a subpath always exists as $B_1\cup B_2$ cover the whole $X$). Note that the sequence $\{a_m\}$ is unbounded in $X$. Since $\overline{X}$ is compact, there exists a subsequence $\{a_{m_k}\}$ that converges to $\eta \in \partial X$. Since $d(a_{m_k},b_{m_k})\leq 1$, it follows $b_{m_k} \to \eta$ in $\overline{X}$ as $\partial X$ is weakly visible that also $\{a_{m_k}\}\to \eta$ in the topology induced by the metric $\overline{d}$ ( since $\overline{d}$ induces the original topology on $\partial X$). Thus, $\eta\in V_1$. By the same logic, $\eta \in V_2$. This gives us a contradiction as $V_1$ and $V_2$ are disjoint. Hence, we have the desired result.   
 \end{proof}
Let $S_g^n$ be the connected orientable surface of genus $g$ and punctures $n$ such that $3g+n-3\geq 2$. Let Mod$(S_g^n)$ and $\mathcal{T}(S_g^n)$ denote the mapping class group and the Teichm\"{u}ller space of $S_g^n$, respectively. For $\epsilon<\epsilon_0$, $\mathcal{T}_{\epsilon}(S_g^n)$ denote a subset of $\mathcal T(S_g^n)$ containing all those elements whose {\em systole} is at least $\epsilon$ (for constant $\epsilon$ and $\epsilon_0$, see \cite[p.1]{hamentadt-z-structure}). By \cite[Theorem 1]{hamentadt-z-structure}, $\mathcal{T}_{\epsilon}(S_g^n)$ is a manifold, and Mod$(S_g^n)$ acts properly and cocompactly on $\mathcal{T}_{\epsilon}(S_g^n)$. Recently, Hamenst\"{a}dt introduced a $Z$-structure for every torsion free finite index subgroup of Mod$(S_g^n)$ \cite[Theorem 4]{hamentadt-z-structure}. In fact, this gives a compactification $\overline{\mathcal{T}(S_g^n)}:=\mathcal{T}_{\epsilon}(S_g^n)\cup X(S_g^n)$ of $\mathcal{T}_{\epsilon}(S_g^n)$. For our purpose, we do not need the full definition of the topology on $\overline{\mathcal{T}(S_g^n)}$. Rather, we just need to know how a sequence of interior points converges to a point of $X(S_g^n)$. This is Definition 4.2 in \cite{hamentadt-z-structure}. Now, we are ready to prove the following:

\begin{lemma}\label{lemma-z-boundary}
    The $Z$-boundary $X(S_g^n)$ is weakly visible.
\end{lemma}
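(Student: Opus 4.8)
The plan is to unwind the definition of weak visibility (Definition \ref{definition-weakly-visible}) and reduce it to the convergence criterion recorded in \cite[Definition 4.2]{hamentadt-z-structure}. So let $\{x_k\}$ and $\{y_k\}$ be uniformly bounded sequences in $\mathcal{T}_\epsilon(S_g^n)$, and suppose $x_k \to \xi$ for some $\xi \in X(S_g^n)$; we must show $y_k \to \xi$ as well. The point is that boundedness of $\{x_k\}$ in the (proper, geodesic) metric space $\mathcal{T}_\epsilon(S_g^n)$ means $\{x_k\}$ lies in a compact subset of the \emph{interior}, so after passing to a subsequence $x_k$ converges to an interior point $x_\infty \in \mathcal{T}_\epsilon(S_g^n)$. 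Since $\overline{\mathcal{T}(S_g^n)}$ is Hausdorff and the inclusion $\mathcal{T}_\epsilon(S_g^n)\hookrightarrow\overline{\mathcal{T}(S_g^n)}$ is a topological embedding, the limit of $\{x_k\}$ in $\overline{\mathcal{T}(S_g^n)}$ is unique and equals $x_\infty$. But by hypothesis that limit is $\xi \in X(S_g^n)$, an ideal point disjoint from the interior — contradiction. Hence no bounded sequence in $\mathcal{T}_\epsilon(S_g^n)$ can converge to a point of $X(S_g^n)$ at all, and the implication in Definition \ref{definition-weakly-visible} holds vacuously.

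In slightly more detail, the steps I would carry out are: (i) recall from \cite[Theorem 1]{hamentadt-z-structure} that $\mathcal{T}_\epsilon(S_g^n)$ is a proper geodesic metric space on which $\mathrm{Mod}(S_g^n)$ acts properly cocompactly, so closed metric balls are compact; (ii) observe that a uniformly bounded sequence therefore subconverges in $\mathcal{T}_\epsilon(S_g^n)$ to an interior point; (iii) invoke that $\overline{\mathcal{T}(S_g^n)} = \mathcal{T}_\epsilon(S_g^n)\cup X(S_g^n)$ is a compactification in which $\mathcal{T}_\epsilon(S_g^n)$ is open and dense and the subspace topology on the interior agrees with the metric topology — this is exactly what a $Z$-structure / $Z$-compactification provides; (iv) conclude by uniqueness of limits in a Hausdorff space that the subsequential limit in $\overline{\mathcal{T}(S_g^n)}$ is that same interior point, which cannot lie in $X(S_g^n)$. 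This contradicts $x_k\to\xi\in X(S_g^n)$, so the hypothesis of the implication is never met.

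The main thing to be careful about — and the only place where one genuinely uses the structure from \cite{hamentadt-z-structure} rather than soft topology — is step (iii): one must confirm that the $Z$-compactification is genuinely a compactification in the sense used in this paper, i.e. that $\mathcal{T}_\epsilon(S_g^n)$ is open in $\overline{\mathcal{T}(S_g^n)}$ and carries its original topology as a subspace. This is part of the definition of a $Z$-structure, so it is not an obstacle so much as a citation to pin down precisely; concretely, it says that \cite[Definition 4.2]{hamentadt-z-structure} describes convergence of interior sequences to \emph{boundary} points, and such convergence forces the sequence to eventually leave every compact subset of the interior. Once that is in hand, the argument is the short contradiction above, and weak visibility of $X(S_g^n)$ follows; combined with Theorem \ref{main-theorem-1} it yields that $X(S_g^n)$ is connected (since $\mathcal{T}_\epsilon(S_g^n)$, admitting a proper cocompact action of the one-ended group $\mathrm{Mod}(S_g^n)$, is one-ended), which is Proposition \ref{proposition-z-boundary-connected}.
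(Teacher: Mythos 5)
Your proof rests on a misreading of Definition \ref{definition-weakly-visible}. In that definition, ``uniformly bounded sequences'' does not mean that each sequence lies in a bounded subset of $X$; it means that the two sequences stay at uniformly bounded distance \emph{from each other}, i.e.\ $\sup_n d(x_n,y_n)<\infty$. This is how the hypothesis is used in the proof of Theorem \ref{main-theorem-1} (the points $a_m, b_m$ satisfy $d(a_m,b_m)\le 1$ but the sequences themselves escape every ball $K_m$), and it is how the paper's own proof of Lemma \ref{lemma-z-boundary} opens: ``Let $\{X_j\}$ and $\{Y_j\}$ be two sequences in $\mathcal{T}_\epsilon(S_g^n)$ such that $d_{\mathcal T}(X_j,Y_j)\le K$.'' Under the correct reading the implication is far from vacuous: the sequences are unbounded, they do converge to boundary points, and the content of the lemma is that fellow-travelling sequences converge to the \emph{same} boundary point. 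Your observation that a metrically bounded sequence in a proper space cannot converge to an ideal point is correct soft topology, but it proves the wrong statement. A sanity check that something has gone astray: if weak visibility were automatic, Theorem \ref{main-theorem-1} would assert that \emph{every} metrizable compactification of a one-ended proper geodesic space has connected boundary, which fails (e.g.\ one can compactify $\mathbb{R}^2$ by two points by collapsing complementary arcs of the visual circle).

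The actual work the paper does, and which your proposal omits entirely, is to verify conditions (1)--(3) of \cite[Definition 4.2]{hamentadt-z-structure} for $\{Y_j\}$ given that they hold for $\{X_j\}$. The key input is quantitative: by Rafi's distance formula \cite[Theorem 6.1]{rafi-combinatorial-model-teich} and the coarse Lipschitz property of subsurface projections, $d_{\mathcal T}(X_j,Y_j)\le K$ forces the projections of $X_j$ and $Y_j$ to every curve graph of a subsurface to be uniformly close, and then \cite[Lemma 3.19]{ABR-structure-invariant} (together with the argument of Lemma 3.20 there) upgrades this to the required convergence statement for $\{Y_j\}$. You would need to supply this step; nothing in your proposal addresses it.
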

\begin{proof}
    Let $d_{\mathcal{T}}$ denote the Teichm\"{u}ller metric on $\mathcal{T}_{\epsilon}(S_g^n)$. Let $\{X_j\}$ and $\{Y_j\}$ be two sequences in $\mathcal{T}_{\epsilon}(S_g^n)$ such that $d_{\mathcal T}(X_j,Y_j)\leq K$ for some $K\geq 0$. Suppose $X_j\to \xi\in X(S_g^n)$ as $j\to\infty$. We need to show that $Y_j\to \xi$ as $j\to\infty.$ For that, we check conditions $(1)$, $(2)$, and $(3)$ of \cite[Definition 4.2]{hamentadt-z-structure}. The first condition is clear. For conditions (2) and (3), we use Lemma 3.19 and the idea of the proof of Lemma 3.20 of \cite{ABR-structure-invariant}. Using the distance formula \cite[Theorem 6.1]{rafi-combinatorial-model-teich} and the fact that projections to subsurfaces are coarsely Lipschitz, we see that the distance between the projection of $X_j$ and $Y_j$ in the curve graphs of the subsurfaces is uniformly bounded. Then, the lemma follows from \cite[Lemma 3.19]{ABR-structure-invariant}.
\end{proof}

Now, Theorem \ref{main-theorem-1} implies the following:
\begin{prop}\label{proposition-z-boundary-connected}
    The $Z$-boundary $X(S_g^n)$ is connected. \qed
\end{prop}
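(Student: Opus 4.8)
The plan is straightforward: this proposition is an immediate consequence of the two results already established. Specifically, I would invoke Theorem~\ref{main-theorem-1} together with Lemma~\ref{lemma-z-boundary}, so the only thing that needs checking is that the hypotheses of Theorem~\ref{main-theorem-1} are met by the pair $(\mathcal{T}_\epsilon(S_g^n), X(S_g^n))$.

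First I would note that $\mathcal{T}_\epsilon(S_g^n)$ is a proper geodesic metric space: by \cite[Theorem~1]{hamentadt-z-structure} it is a manifold on which $\mathrm{Mod}(S_g^n)$ acts properly and cocompactly, hence it is proper, and one can take it equipped with (a length metric comparable to) the Teichm\"{u}ller metric so that it is geodesic. Second, I would record that $\overline{\mathcal{T}(S_g^n)} = \mathcal{T}_\epsilon(S_g^n)\cup X(S_g^n)$ is a metrizable compactification of $\mathcal{T}_\epsilon(S_g^n)$, which is precisely the content of the $Z$-structure supplied by \cite[Theorem~4]{hamentadt-z-structure} (a $Z$-structure in particular provides a metrizable compactification in which the original space sits as a dense open subset). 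Third, $\mathcal{T}_\epsilon(S_g^n)$ is one-ended: since $\mathrm{Mod}(S_g^n)$ acts on it geometrically and $\mathrm{Mod}(S_g^n)$ is one-ended (for $3g+n-3\geq 2$, the mapping class group is one-ended, as it is not virtually free and does not virtually split over a finite or two-ended subgroup — indeed it contains $\mathbb{Z}^2$ subgroups and is known to be one-ended), the space $\mathcal{T}_\epsilon(S_g^n)$ is quasi-isometric to $\mathrm{Mod}(S_g^n)$ and hence one-ended. Finally, Lemma~\ref{lemma-z-boundary} tells us $X(S_g^n)$ is weakly visible. Applying Theorem~\ref{main-theorem-1} then yields that $X(S_g^n)$ is connected.

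The one point that merits a sentence of care — and the closest thing to an obstacle — is the one-endedness of $\mathcal{T}_\epsilon(S_g^n)$, or equivalently of $\mathrm{Mod}(S_g^n)$, under the standing hypothesis $3g+n-3\geq 2$. This excludes the sporadic low-complexity cases where the mapping class group is virtually free (such as $S_0^4$ or $S_1^1$), and in the remaining range one-endedness is classical; alternatively, one can cite that $\mathrm{Mod}(S_g^n)$ is an HHG whose hierarchical boundary is connected (Theorem~\ref{main-theorem-2} applied to $\mathrm{Mod}(S_g^n)$), but that is circular-flavoured and unnecessary here. I would simply state the one-endedness with a pointer to the literature and let Theorem~\ref{main-theorem-1} do the rest. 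Since the proposition as displayed in the excerpt already carries a \qed, no written proof body is strictly demanded, but the paragraph above is the justification I would insert.
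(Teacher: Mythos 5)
Your proposal is correct and is exactly the paper's argument: the paper derives this proposition immediately from Theorem \ref{main-theorem-1} via Lemma \ref{lemma-z-boundary}, leaving the hypothesis checks (properness, metrizable compactification, one-endedness of $\mathcal{T}_{\epsilon}(S_g^n)$ via the geometric action of the one-ended group $\mathrm{Mod}(S_g^n)$) implicit, which you have simply spelled out. No gap.
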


Since the hierarchical boundary of a proper HHS is weakly visible \cite[Lemma 3.20]{ABR-structure-invariant}, we immediately have the following:
 
 \begin{cor}\label{corollary-hhs-one-ended-connected}
     The hierarchical boundary of a one-ended proper hierarchically hyperbolic space is connected. \qed
 \end{cor}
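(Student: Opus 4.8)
The plan is to deduce Corollary \ref{corollary-hhs-one-ended-connected} directly from Theorem \ref{main-theorem-1}, so the only real content is to verify that a one-ended proper HHS, together with its hierarchical boundary, satisfies the hypotheses of that theorem. First I would recall that Theorem \ref{cpt-metrizablity-hhs-bdry} already gives us everything we need about the ambient space: when $\XX$ is proper, $\overline{\XX} = \XX \cup \partial(\XX,\SSS)$ is compact and metrizable, and $\XX$ is dense in $\overline{\XX}$; density plus the fact that $\XX$ is open in $\overline{\XX}$ (this is part of how the topology is set up, with the open sets of $\XX$ declared open in $\overline{\XX}$) shows that $\overline{\XX}$ is a metrizable compactification of $\XX$ in the precise sense required before Definition \ref{definition-weakly-visible}. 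A proper HHS is in particular a proper geodesic metric space (up to the usual quasigeodesic-to-geodesic adjustment, which does not affect properness or the boundary), so the only remaining hypothesis to check is weak visibility of $\partial(\XX,\SSS)$.

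For weak visibility I would simply invoke \cite[Lemma 3.20]{ABR-structure-invariant}, which is precisely the statement that if $\{x_n\}$ and $\{y_n\}$ are sequences in a proper HHS with $d_{\XX}(x_n,y_n)$ uniformly bounded and $x_n \to \xi \in \partial(\XX,\SSS)$, then $y_n \to \xi$ as well. This is exactly the condition in Definition \ref{definition-weakly-visible}. With all three hypotheses of Theorem \ref{main-theorem-1} in hand — proper geodesic, one-ended, weakly visible boundary — the conclusion that $\partial(\XX,\SSS)$ is connected is immediate.

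The one point that deserves a sentence of care, and which I expect to be the only mild obstacle, is the interface between ``$E$-quasigeodesic metric space'' (the literal definition of an HHS) and ``proper geodesic metric space'' (the hypothesis of Theorem \ref{main-theorem-1}). The standard resolution is that any proper quasigeodesic space is quasi-isometric to a proper geodesic space — e.g. by passing to a Rips-type graph or an approximation graph — and the hierarchical boundary, being a quasi-isometry invariant in the appropriate sense, is unchanged; alternatively one observes that Theorem \ref{main-theorem-1} and its proof only use the geodesic hypothesis through the existence of geodesics avoiding large balls, which follows from one-endedness together with a quasigeodesic version of the same argument. Since for an HHG one works with the Cayley graph anyway, this subtlety is invisible in the group case; I would note it in a half-line and move on. Thus the proof of the corollary is: properness plus Theorem \ref{cpt-metrizablity-hhs-bdry} give a metrizable compactification, \cite[Lemma 3.20]{ABR-structure-invariant} gives weak visibility, and Theorem \ref{main-theorem-1} finishes.
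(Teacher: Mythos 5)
Your proposal is correct and follows the paper's own route exactly: the paper deduces the corollary from Theorem \ref{main-theorem-1} by citing \cite[Lemma 3.20]{ABR-structure-invariant} for weak visibility of the hierarchical boundary of a proper HHS, with Theorem \ref{cpt-metrizablity-hhs-bdry} supplying the metrizable compactification. Your extra remark on the quasigeodesic-versus-geodesic interface is a reasonable point of care that the paper leaves implicit, but it does not change the argument.
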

 It is known that $\mathcal T(S_g^n)$ with respect to either Teichm\"{u}ller metric or Weil--Petersson metric is an HHS \cite[Theorem G]{BHSI}. Since $\mathcal T(S_g^n)$ is one-ended, we have the following:
 \begin{cor}\label{corollary-teich-connected}
     The HHS boundary of $\mathcal T(S_g^n)$ is connected.   \qed
 \end{cor}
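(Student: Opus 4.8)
The plan is to read this off directly from Corollary~\ref{corollary-hhs-one-ended-connected}, so the whole task is to check that $\mathcal T(S_g^n)$ is a one-ended proper hierarchically hyperbolic space. First I would fix the Teichm\"uller metric $d_{\mathcal T}$ on $\mathcal T(S_g^n)$: it is classical that $(\mathcal T(S_g^n),d_{\mathcal T})$ is a complete, uniquely geodesic metric space in which closed metric balls are compact, hence a proper geodesic metric space. By \cite[Theorem G]{BHSI} it admits a hierarchically hyperbolic structure $\SSS$ (whose index set is the collection of essential subsurfaces, with the associated curve graphs as the hyperbolic spaces $\CC S$ and the subsurface projections as the $\pi_S$), so $(\mathcal T(S_g^n),\SSS)$ is a proper HHS.

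Next I would record one-endedness. Since $\mathcal T(S_g^n)$ is homeomorphic to $\mathbb R^{6g-6+2n}$ and the standing hypothesis $3g+n-3\ge 2$ forces $6g-6+2n\ge 4\ge 2$, the space $\mathcal T(S_g^n)$ is a Euclidean space of dimension at least two, and such spaces are one-ended (the complement of any closed ball is connected). Feeding these two facts into Corollary~\ref{corollary-hhs-one-ended-connected} applied to $(\mathcal T(S_g^n),\SSS)$ then gives that $\partial(\mathcal T(S_g^n),\SSS)$ is connected.

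The only point meriting a comment --- and about the only thing resembling an obstacle --- is the phrase ``the HHS boundary of $\mathcal T(S_g^n)$'': one might prefer the Weil--Petersson metric, but $(\mathcal T(S_g^n),d_{WP})$ is incomplete, hence not proper, and Corollary~\ref{corollary-hhs-one-ended-connected} does not apply to that model directly. I would simply observe that the hierarchical boundary, together with its topology, is built from the HHS structure and its curve graphs, projections, and $\rho$-maps alone --- the subspace topology on $\partial(\mathcal T(S_g^n),\SSS)$ never invokes the metric of the underlying space --- so for definiteness one works with the proper Teichm\"uller model $(\mathcal T(S_g^n),d_{\mathcal T},\SSS)$, to which the corollary does apply. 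Apart from this bookkeeping remark, the argument is a one-line invocation of the preceding corollary.
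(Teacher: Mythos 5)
Your proposal is correct and follows exactly the paper's (essentially one-line) argument: $\mathcal T(S_g^n)$ is an HHS by \cite[Theorem G]{BHSI} and is one-ended, so Corollary \ref{corollary-hhs-one-ended-connected} applies. The extra care you take in verifying properness of the Teichm\"uller-metric model and in flagging that the Weil--Petersson model is not proper is a sensible clarification of a point the paper leaves implicit, but it does not change the route.
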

 For hierarchically hyperbolic groups, we prove the converse of Corollary \ref{corollary-hhs-one-ended-connected}.
\begin{theorem}\label{main-theorem-2}
	Let $(G,\SSS)$ be an HHS. The hierarchical boundary $\partial (G,\SSS)$ is connected if and only if $G$ is one-ended.
\end{theorem}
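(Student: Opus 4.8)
The plan is to prove the two directions separately, with Corollary \ref{corollary-hhs-one-ended-connected} already handling one of them once we pass from an HHG to an associated HHS. Since $(G,\SSS)$ is an HHG, the Cayley graph $\Gamma$ of $G$ is a proper geodesic HHS and $\partial(G,\SSS) = \partial(\Gamma,\SSS)$; hence if $G$ is one-ended, then $\Gamma$ is one-ended and proper, so by Corollary \ref{corollary-hhs-one-ended-connected} the boundary $\partial(G,\SSS)$ is connected. This is the easy direction.

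For the converse, I would argue by contraposition: assume $G$ is \emph{not} one-ended and show $\partial(G,\SSS)$ is disconnected. A finitely generated group has either $0$, $1$, $2$, or infinitely many ends; since $\partial(G,\SSS)=\emptyset$ exactly when $G$ is finite (in which case there is nothing to connect — or, depending on conventions, $G$ finite should be excluded), we may assume $G$ is infinite with either two or infinitely many ends. By Stallings' theorem, $G$ splits as an amalgamated product or HNN extension over a finite subgroup; in particular $G$ acts on a tree with finite edge stabilizers and more than one end, so $G$ is hyperbolic relative to the collection $\mathcal P$ of its one-ended factors (if any) together with — in the purely free/virtually free case — relative to the trivial subgroup, so that the Bowditch boundary is defined and disconnected. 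More concretely: when $G$ has infinitely many ends it is accessible and decomposes as a finite graph of groups with finite edge groups and vertex groups that are finite or one-ended; the Bowditch boundary of $G$ relative to the one-ended vertex groups is then a disconnected space (it contains a Cantor-set worth of ``ends'', or at the very least is disconnected by \cite[Theorem 10.1]{bowditch-relhyp} since $G$ is not one-ended). The key input is Theorem \ref{bowditch-boudary-hhs}: the Bowditch boundary of $G$ with respect to $\mathcal P$ is a quotient of $\partial(G,\SSS)$ obtained by collapsing the limit set of each peripheral coset to a point.

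The crux is then a separation argument: a quotient map $q\colon \partial(G,\SSS)\to \partial(G,\mathcal P)$ whose target is disconnected forces the source to be disconnected, \emph{provided the fibers are connected} (a quotient of a connected space by a map with connected fibers onto a disconnected space is impossible — actually one only needs that a continuous surjection from a connected space has connected image, so disconnectedness of the target immediately gives disconnectedness of the source, with no hypothesis on fibers at all). So in fact the implication is immediate once we know $\partial(G,\mathcal P)$ is disconnected and that $q$ is continuous and surjective: the continuous image of a connected space is connected, contradiction. Thus the real content is: (i) verify that a non-one-ended infinite HHG is relatively hyperbolic with respect to a collection $\mathcal P$ for which Theorem \ref{bowditch-boudary-hhs} applies, and (ii) verify that the resulting Bowditch boundary is disconnected — this is exactly \cite[Theorem 10.1]{bowditch-relhyp}, which says the Bowditch boundary of a relatively hyperbolic group is connected if and only if the group does not split over a finite or peripheral subgroup, i.e. is ``one-ended relative to $\mathcal P$'', which fails here since $G$ already has more than one end.

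The main obstacle I anticipate is the bookkeeping in step (i): one must be careful that an arbitrary HHG with more than one end really is relatively hyperbolic with a suitable $\mathcal P$ — the cleanest route is Stallings plus accessibility of finitely presented (indeed finitely generated) groups, noting that HHGs are finitely presented, so $G$ is the fundamental group of a finite graph of groups with finite edge groups, giving relative hyperbolicity with respect to the infinite vertex groups; if all vertex groups are finite then $G$ is virtually free and $\partial(G,\SSS)$ maps onto a Cantor set, so is disconnected. A secondary subtlety is the degenerate case where $G$ is finite or has exactly two ends (virtually $\mathbb Z$): these should be addressed explicitly, the latter being the case where $\partial(G,\SSS)$ has two points hence is disconnected, consistent with virtually-$\mathbb Z$ not being one-ended. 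I would phrase the final writeup to dispatch these small cases first and then run the relatively-hyperbolic-quotient argument in the main case.
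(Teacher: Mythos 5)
Your proposal is correct and follows essentially the same route as the paper: the one-ended direction via Corollary \ref{corollary-hhs-one-ended-connected}, and the converse by the same case analysis (two-ended gives a two-point boundary; infinitely many ends gives a splitting over finite edge groups, hence either a Cantor-set boundary in the virtually free case or relative hyperbolicity with respect to the infinite vertex groups, so that Theorem \ref{bowditch-boudary-hhs} exhibits the disconnected Bowditch boundary as a continuous image of $\partial(G,\SSS)$). Your parenthetical self-correction about fibers is right — no hypothesis on fibers is needed — and the only cosmetic difference is that the paper sees the disconnectedness of the Bowditch boundary directly from the coned-off Cayley graph being quasiisometric to the Bass--Serre tree rather than citing Bowditch's Theorem 10.1.
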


\begin{proof}
Suppose $G$ is one-ended. Then, by Corollary \ref{corollary-hhs-one-ended-connected}, $\partial (G,\SSS)$ is connected.
Conversely, suppose $\partial (G,\mathfrak{S})$ is connected. Suppose, if possible, $G$ is not one-ended. Then, there are the following two cases:

{\bf Case 1.} Suppose $G$ is two-ended. Then, $G$ is virtually cyclic. Therefore, $G$ is hyperbolic and by \cite[Theorem 4.3]{DHS-boundary} $\partial (G,\mathfrak{S})$ has only two elements. Thus, this case is not possible.

{\bf Case 2.} Suppose $G$ has infinitely many ends. Then, by \cite[Chapter I, Theorem 8.32(5)]{bridson-haefliger}, $G$ splits as a graph of groups over finite edge groups. If all the vertex groups are finite, then $G$ is virtually a non-Abelian free group, and hence $\partial (G,
\mathfrak{S})$ is homeomorphic to the Cantor set. Since the Cantor set is not a connected space, at least one vertex group has to be infinite. Hence, $G$ is hyperbolic relative to infinite vertex groups \cite{bowditch-relhyp}. Note that the Bowditch boundary of $G$ is disconnected as the coned-off Cayley graph of $G$ with respect to the infinite vertex groups is quasiisometric to the Bass--Serre tree of the splitting of $G$. Now, by Theorem \ref{bowditch-boudary-hhs}, the Bowditch boundary of $G$ is a quotient of $\partial (G,\mathfrak{S})$. Thus, $\partial(G,\mathfrak{S})$ is disconnected. This gives us a contradiction. Hence, this case is also not possible.
Since a finitely generated group, either one-ended, two-ended, or infinite-ended, by the above two cases, we see that $G$ is one-ended.
\end{proof}

It is well known that Mod$(S_g^n)$ is a hierarchically hyperbolic group \cite[Theorem 11.1]{BHSII}. The following is immediate from the above theorem.
\begin{cor}\label{corollary-mcg-connected}
	For $3g+n-3\geq 2$, the HHG boundary of {\em Mod}$(S_g^n)$ is connected.
\end{cor}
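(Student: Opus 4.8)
The plan is to read the corollary off Theorem \ref{main-theorem-2}. Recall that $\mathrm{Mod}(S_g^n)$ is a hierarchically hyperbolic group by \cite[Theorem 11.1]{BHSII}; fixing any HHG structure $\SSS$ on it, Theorem \ref{main-theorem-2} asserts that $\partial(\mathrm{Mod}(S_g^n),\SSS)$ is connected if and only if $\mathrm{Mod}(S_g^n)$ is one-ended. So the entire content of the corollary is the one-endedness of the mapping class group in the range $3g+n-3\geq 2$.

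I would establish one-endedness as follows. The group $\mathrm{Mod}(S_g^n)$ is infinite, and it is not two-ended: since $3g+n-3\geq 2$, a pants decomposition of $S_g^n$ contains two disjoint non-isotopic essential simple closed curves, whose Dehn twists generate a copy of $\mathbb{Z}^2$, which cannot sit inside a virtually cyclic group. It therefore remains to exclude infinitely many ends. By Stallings' theorem (in the form of \cite[Chapter I, Theorem 8.32(5)]{bridson-haefliger}, already used in Case 2 of the proof of Theorem \ref{main-theorem-2}), infinitely many ends would force a splitting of $\mathrm{Mod}(S_g^n)$ as a graph of groups over finite edge groups, and hence make $\mathrm{Mod}(S_g^n)$ hyperbolic relative to the (proper) vertex groups of that splitting, exactly as in that Case 2. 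But $\mathrm{Mod}(S_g^n)$ is thick for $3g+n-3\geq 2$ (Behrstock--Dru\c{t}u--Mosher; see also \cite{ABR-thicknesshierarchically}), and a thick group is not hyperbolic relative to any finite collection of proper subgroups. This contradiction shows $\mathrm{Mod}(S_g^n)$ is one-ended, and Theorem \ref{main-theorem-2} then yields the corollary.

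The only non-formal ingredient is the thickness of the mapping class group (equivalently, the fact that it admits no splitting over a finite subgroup), which is classical; everything else is a direct bookkeeping application of Theorem \ref{main-theorem-2}, and I expect the author's proof to be essentially the one-line deduction ``$\mathrm{Mod}(S_g^n)$ is an HHG, it is one-ended, apply Theorem \ref{main-theorem-2}''.
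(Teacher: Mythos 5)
Your proposal is correct and matches the paper's argument: the author likewise deduces one-endedness from the fact that $\mathrm{Mod}(S_g^n)$ is neither hyperbolic nor relatively hyperbolic when $3g+n-3\geq 2$, and then applies Theorem \ref{main-theorem-2}. Your write-up merely makes explicit the standard justifications (the $\mathbb{Z}^2$ of commuting Dehn twists and thickness) that the paper leaves as known facts.
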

\begin{proof}
	Since $3g+n-3\geq 2$, Mod$(S_g^n)$ is neither a hyperbolic nor a relatively hyperbolic group. Hence, Mod$(S_g^n)$ is one-ended. Thus, by Theorem \ref{main-theorem-2}, we are done.
\end{proof}

We end this section with the following remark.
\begin{rem}
	Let $G$ be a finitely generated group that is hyperbolic relative to a finite collection of subgroups $\mathcal{P}$. Let $\partial_{\text{rel}}(G)$ denote the Bowditch boundary of $G$ with respect to $\mathcal{P}$. Suppose $G$ is one-ended. Then, using the same idea of the proof of Theorem \ref{main-theorem-2}, one can show that $\partial_{\text{rel}}(G)$ is connected. This recovers the result of Bowditch, which says that $\partial_{\text{rel}}(G)$ is connected if $G$ does not split non-trivially over finite groups relative to parabolic subgroups.
\end{rem}

\section{Hierarchical boundaries of free products of HHGs}\label{4}
Throughout this section, we fix a free product $G=A\ast B$. When $A$ and $B$ are HHGs, this section aims to give a hierarchical structure of $G$. Using this structure, we then give a description of the hierarchical boundary of $G$. This description is crucial for proving Theorem 3 in the following sections.
\subsection{A model space for a free product of HHGs}\label{4.1}  In this subsection, we associate a graph to the splitting of $G$ that is naturally quasiisometric to a Cayley graph of $G$. This construction can be extended in a straightforward way to free products of finitely many groups.

The definition of the Bass--Serre tree of a graph of groups is classical \cite{serre-trees}. For completeness, we recall it for free products of groups. Let $\tau$ be a unit interval with vertices $v_A$ and $v_B$. We define a tree $T$, called the Bass--Serre tree of the splitting of $G$, as $G\times \tau$ divided by the transitive closure of the following relation $\sim$ $$(g_1,v_A)\sim (g_2,v_A) \text{ if } g_1^{-1}g_2\in A,$$ $$(g_1,v_B)\sim (g_2,v_B) \text{ if } g_1^{-1}g_2\in B.$$

Let $S_A$ and $S_B$ be generating sets of $A$ and $B$, respectively. Let $\Gamma_A$ and $\Gamma_B$ be the Cayley graphs of $A$ and $B$ with respect to $S_A$ and $S_B$, respectively. Define a graph $Y$ as the union of $\Gamma_A,\Gamma_B$ and $\tau$, where $v_A$ is identified with the identity of $A$ and $v_B$ is identified with the identity of $B$. Define a graph $\Gamma$ as $G\times Y$ modulo an equivalence relation induced by 
$$(g_1,y_1)\sim(g_2,y_2) \text{ if }y_1,y_2\in \Gamma_A \text{ and } g_2^{-1}g_1y_1=y_2,$$
$$(g_1,y_1)\sim(g_2,y_2) \text{ if }y_1,y_2\in \Gamma_B \text{ and } g_2^{-1}g_1y_1=y_2.$$
Thus, we obtain a tree of Cayley graphs $\Gamma\to T$, where the preimage of each vertex $v\in T$, called the {\em vertex space} corresponding to $v$, is isometric to the Cayley graph of the stabilizer $G_v$ of $v$ in $G$ (\cite[Subsection 2.1]{martin-swiat}). For $v\in T$, we denote the vertex space corresponding to $v$ by $\Gamma_v$. The following are a few observations about $\Gamma$.

(1) There is a bijection between all the edges of $\Gamma$ connecting different Cayley graphs and the edges of $T$. We call them {\em lifts} of the corresponding edges of $T$. 

(2) By collapsing lifts of all the edges of $T$ to points, we get a natural quotient map from $\Gamma$ to the Cayley graph of $G$ with respect to $S_A\cup S_B$ which is a $G$-equivariant quasiisometry. Also, the natural left action of $G$ on $\Gamma$ is geometric.

(3) Since $G$ is hyperbolic relative to $\{A,B\}$ \cite{bowditch-relhyp}, the graph $\Gamma$ is hyperbolic relative to $\{\Gamma_v:v\in T\}$.

\subsection{HHG structure on $G$}\label{4.2} Let $\SSS_A$ and $\SSS_B$ be HHG structures on $A$ and $B$, respectively. For $g\in G$, let $g\SSS_A$ be
a copy of $\SSS_A$ with its associated hyperbolic spaces and projections in such a way that there is
a hieromorphism (see \cite{BHSII}) $A\to gA$ equivariant with respect to the conjugation isomorphism $A\to A^g$. Similarly, one can put a hierarchical structure on the cosets of $B$ in $G$. For each vertex $v\in T$, we denote the hierarchical structure on $\Gamma_v$ by $\SSS_v$. Since $\Gamma$ is hyperbolic relative to $\{\Gamma_v\}$ and $(\Gamma_v,\SSS_v)$'s are hierarchically hyperbolic, by \cite[Theorem 9.1]{BHSII}, $\Gamma$ is an HHS. We denote this HHS structure on $\Gamma$ by $\SSS$. This implies that $(G,\SSS)$ is an HHG. Now, we briefly explain the HHG structure $\SSS$ on $\Gamma$.

{\bf Indexing set.} Let $\hat{\Gamma}$ denote the graph obtained by coning-off each subspace $\Gamma_v$. Define $$\SSS:=\{\hat{\Gamma}\}\cup(\bigsqcup_{v\in T}\SSS_v).$$

{\bf Hyperbolic spaces.} The hyperbolic space $\CC\hat{\Gamma}$ for $\hat{\Gamma}$ is $\hat{\Gamma}$ itself, while the hyperbolic space for $U\in\SSS_v$, for some $v$, was defined above.

{\bf Relations.} The nesting, orthogonality, transversality relations on each $\SSS_v$ are as above. If $U,V\in\SSS_v,\SSS_w$, and $v\neq w$, then declare $U\pitchfork V$. Finally, for all $U\in \SSS$, let $U\sqsubseteq \hat{\Gamma}$.

{\bf Projections.} For $\hat{\Gamma}$, $\pi_{\hat{\Gamma}}:\Gamma\to\hat{\Gamma}$ is the inclusion which is coarsely surjective. For each $v\in T$, let $\pi_v$ denotes the nearest point projection of $\Gamma$ onto $\Gamma_v$. Then, for $U\in\SSS_v$, define $\pi_U:=\pi_{U,v}\circ\pi_v$, where $\pi_{U,v}:\Gamma_v\to \CC U$ is the projection in $(\Gamma,\SSS_v)$.

{\bf Relative projections.} For $U\in\SSS_v$, $\rho_{\hat{\Gamma}}^U$ is the cone-point corresponding to $\Gamma_v$. If $U,V\in\SSS_v$ then the coarse maps $\rho_{U}^V$ and $\rho_{V}^U$ were already defined. If $U,V\in \SSS_u,\SSS_v$ and $u\neq v$, then $\rho_{V}^U:=\pi_V(\pi_v(\Gamma_u))$ and $\rho_U^V:=\pi_U(\pi_u(\Gamma_v))$. Finally, if, for $U\in\SSS_v$, $U\sqsubsetneq \hat{\Gamma}$ then $\rho_U^{\hat{\Gamma}}:\hat{\Gamma}\to \CC U$ is defined as follows:

(i) If $x\in \Gamma$, then $\rho_U^{\hat{\Gamma}}(x):=\pi_U(x)$.

(ii) If $x$ is the cone-point over $\Gamma_w$ and $v\neq w$. Then, $\rho_{U}^{\hat{\Gamma}}(x):=\rho_{U}^{S_w}$, where $S_w$ is $\sqsubseteq$-maximal element of $\SSS_w$. The cone-point over $\Gamma_v$ may be sent anywhere in $\CC U$.

For $v\in T$, let $\hat{\Gamma_v}$ denote the coned-off graph obtained by coning-off $\Gamma_v$. We already have observed a one-one correspondence between the edges of $T$ and edges in $\Gamma$ connecting different Cayley graphs. We conclude this subsection by noting the following, whose proof is clear, and thus we omit it.
\begin{lemma}\label{coned-off-lemma}
	 Let $\phi:\hat{\Gamma}\to T$ be a map that sends $\hat{\Gamma}_v$ to $v$, and the edges connecting different Cayley graphs to the corresponding edges of $T$. Then, $\phi$ is a continuous quasiisometry. \qed
\end{lemma}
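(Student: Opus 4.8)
The plan is to verify directly that the map $\phi\colon\hat\Gamma\to T$ is both a quasiisometry and continuous, using the tree-of-spaces structure of $\Gamma$ and the fact that $T$ is the Bass--Serre tree of the splitting. First I would set up $\phi$ carefully on the combinatorial level: $\phi$ collapses each coned-off vertex space $\hat\Gamma_v$ to the vertex $v$ of $T$, and sends each lift of an edge $e$ of $T$ (i.e.\ each edge of $\Gamma$ joining distinct Cayley graphs) to $e$ itself; on the cone points introduced to form $\hat\Gamma$, I would send the cone point over $\Gamma_v$ to $v$ as well. One then extends affinely over edges. The key observation is that, by construction of $\Gamma$ as a tree of Cayley graphs, a combinatorial path in $\hat\Gamma$ from a point of $\hat\Gamma_v$ to a point of $\hat\Gamma_w$ must cross the lifts of exactly the edges on the geodesic $[v,w]_T$ (in the same cyclic order), since the edges separating vertex spaces in $\Gamma$ are in $G$-equivariant bijection with the edges of $T$ and collapsing them recovers the Cayley graph of $G$, which maps onto $T$. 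This is precisely observation (1) in Subsection \ref{4.1}.

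Next I would prove the quasiisometry estimate. For the lower bound $d_T(\phi(x),\phi(y))\le d_{\hat\Gamma}(x,y)+C$: any edge-path in $\hat\Gamma$ realizing (up to an additive constant) the distance $d_{\hat\Gamma}(x,y)$ projects under $\phi$ to an edge-path in $T$ from $\phi(x)$ to $\phi(y)$ whose length is at most the number of ``inter-space'' edges traversed, hence at most $d_{\hat\Gamma}(x,y)$; since $T$ is a tree, $d_T(\phi(x),\phi(y))$ is bounded by this. For the upper bound $d_{\hat\Gamma}(x,y)\le d_T(\phi(x),\phi(y))+C$: given $x\in\hat\Gamma_v$, $y\in\hat\Gamma_w$, traverse the lifts of the $d_T(v,w)$ edges of $[v,w]_T$ in order, connecting consecutive lifts by passing through the intervening cone point (distance $1$ in $\hat\Gamma$, since each $\Gamma_u$ has diameter $\le 1$ after coning); finally connect $x$ (resp.\ $y$) to the first (resp.\ last) lift through the cone point over $\Gamma_v$ (resp.\ $\Gamma_w$), adding a bounded constant. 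Coarse surjectivity is immediate since $\phi$ hits every vertex of $T$. This gives that $\phi$ is a $(\lambda,C)$-quasiisometry for uniform constants.

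Finally, continuity: after extending $\phi$ affinely across edges, $\phi$ is a simplicial-type map on a metric graph, so it is $1$-Lipschitz on each closed edge and agrees on shared endpoints; continuity then follows on the whole metric graph $\hat\Gamma$. I expect the only mildly delicate point — and hence the ``main obstacle,'' though it is not a serious one — to be bookkeeping the behavior of $\phi$ at the cone points of $\hat\Gamma$ and confirming that the coned-off graph $\hat\Gamma$, as defined, really does have the property that paths between distinct vertex spaces cross exactly the lifts dictated by $T$; once observation (1) of Subsection \ref{4.1} is invoked this is routine, which is why the statement is asserted without proof in the paper. I would then simply remark that all constants are uniform and conclude.
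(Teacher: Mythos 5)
Your proof is correct and is precisely the routine verification the paper has in mind when it declares the proof ``clear'' and omits it: the lower bound on $d_T$ by projecting edge-paths and counting inter-space edges, the upper bound by concatenating the lifts of the edges of $[v,w]_T$ through the intervening cone points, coarse surjectivity onto the vertices of $T$, and continuity from the edgewise (constant or affine) description of $\phi$. The only nit is quantitative: each coned-off vertex space $\hat{\Gamma}_u$ has diameter at most $2$ (two edges through the cone point), not $1$, which changes nothing in the conclusion.
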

\subsection{Construction of the compactification}\label{4.3}
Suppose $(A,\SSS_A)$ and $(B,\SSS_B)$ are HHGs. Let $\SSS$ be the hierarchical structure as described in Subsection \ref{4.2}. Here, we construct a compact metrizable space which turns out to be the hierarchical boundary of $(G,\SSS)$. For this, we follow the construction of Martin--\'{S}wi\polhk{a}tkowski \cite[Subsection 2.2]{martin-swiat}, the only difference is that we are taking the hierarchical boundary in place of the Gromov boundary. 

{\bf Boundaries of the stabilizers.} Let $\delta_{Stab}(\Gamma)$ be the set $G\times(\partial (A,\SSS_A)\cup\partial (B,\SSS_B))$ divided by the equivalence relation induced by
$$(g_1,\xi_1)\sim (g_2,\xi_2) \text{ if } \xi_1,\xi_2\in \partial A, g_2^{-1}g_1\in A \text{ and } g_2^{-1}g_1\xi_1=\xi_2,$$
$$(g_1,\xi_1)\sim (g_2,\xi_2) \text{ if } \xi_1,\xi_2\in \partial B, g_2^{-1}g_1\in B \text{ and } g_2^{-1}g_1\xi_1=\xi_2.$$
The equivalence class of an element $(g,\xi)$ is denoted by $[g,\xi]$. The set $\delta_{Stab}(\Gamma)$ comes with a natural action of $G$ on the left. This also comes with a natural projection onto the set of vertices of $T$, which sends the boundary of each vertex stabilizer to the vertex. The preimage of each vertex $v\in T$ is denoted by $\partial(\Gamma_v,\SSS_v)$.

Let $\partial T$ denote the Gromov boundary of $T$. Then, we define the {\em boundary} of $\Gamma$ as
$$\delta(\Gamma):=\delta_{Stab}(\Gamma)\sqcup\partial T.$$

Also, we define a set $\overline{\Gamma}$ (which will be called the {\em compactification} of $\Gamma$) as $$\overline{\Gamma}:=\Gamma\cup\delta(\Gamma).$$
This set has a natural action of $G$ and a natural map $\pi_T:\overline{\Gamma}\to T\cup\partial T$, which sends $\Gamma\cup\delta_{Stab}(\Gamma)$ to $T$. The preimage of a vertex $v\in T$ is $\Gamma_v\cup\partial G_v$ that is identified as a set with $\overline{\Gamma_v}.$

{\bf Topology on $\overline{\Gamma}$.} For a point $x\in \Gamma$, we define a basis of open neighborhoods of $x$ in $\Gamma$ to be a basis of open neighborhoods of $x$ in $\overline{\Gamma}$. Now, we define a basis of open neighborhoods for points of $\delta(\Gamma)$. Fix a vertex $v_0$ of $T$.

(1) Let $\xi\in\delta_{Stab}(\Gamma)$. Suppose $v$ is the vertex of $T$ such that $\xi\in\partial(G_v)$. Let $U$ be an open neighborhood of $\xi$ in $\overline{\Gamma_v}$. Define $V_U$ to be the set of all $z\in\overline{\Gamma}$ such that $\pi_T(z)\neq v$ and the first edge of the geodesic in $T$ from $v$ to $\pi_T(z)$ lifts to an edge of $\Gamma$ that is glued to a point of $U$. Then we set $$V_U(\xi):=U\cup V_U.$$ A neighborhood basis of $\xi$ in $\overline{\Gamma}$ is a collection of set $V_U(\xi)$ where $U$ runs over some neighborhood basis of $\xi$ in $\overline{\Gamma_v}$.

(2) Let $\eta\in \partial T$. Let $T_n(\eta)$ be the subtree of $T$ consisting of those elements $x$ of $T$ for which the first $n$ edges of $[v_0,x]$ and $[v_0,\eta)$ are the same. Suppose $u_n(\eta)$ is the vertex on $[v_0,\eta)$ at the distance $n$ from $v_0$. Let $\partial (T_n{(\eta)})$ denote the Gromov boundary of $T_n{(\eta)}$, and let $\overline{T_n(\eta)}=T_n(\eta)\cup\partial(T_n(\eta))$. We define $$V_n(\eta)=\pi_T^{-1}(\overline{T_n(\eta)}\setminus\{u_n(\eta)\}).$$ We take the collection $\{V_n(\eta):n\geq 1\}$ as a basis of open neighborhoods of $\eta$ in $\overline{\Gamma}$.

We skip a verification that the above collections of sets satisfy the axioms for the basis of open neighborhoods, for an idea of proof, one is referred to \cite[Theorem 6.17]{martin1}. We denote this topology by $\tau$ on $\delta(\Gamma)$. A proof of the following lemma is the same as \cite[Lemma 3.3]{martin-swiat}.
\begin{lemma}
	The space $(\delta(\Gamma),\tau)$ is Hausdorff.
\end{lemma}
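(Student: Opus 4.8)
The plan is to follow the scheme of \cite[Lemma 3.3]{martin-swiat} essentially verbatim: the only inputs needed are that each $\overline{\Gamma_v}$ is a compact metrizable, hence Hausdorff, space — this is Theorem \ref{cpt-metrizablity-hhs-bdry} applied to the proper HHS $\Gamma_v$ (a locally finite Cayley graph of the vertex group) — and that $T$ is a simplicial tree. Given two distinct points $p,q\in\delta(\Gamma)$, I would produce disjoint open neighbourhoods by a case analysis according to whether each of $p,q$ lies in a vertex boundary $\partial G_v\subseteq\delta_{Stab}(\Gamma)$ or in $\partial T$, using only the explicit neighbourhood bases $\{V_U(\xi)\}$ and $\{V_n(\eta)\}$ fixed above. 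The recurring mechanism is the following observation: if $\xi\in\partial G_v$ and $w$ is a vertex of $T$ with $w\neq v$, then the first edge $e$ of the geodesic $[v,w]$ lifts to an edge of $\Gamma$ attached at a single vertex $p_e$ of $\Gamma_v$; since $\overline{\Gamma_v}$ is Hausdorff and $\xi\neq p_e$, there is an open $U\ni\xi$ in $\overline{\Gamma_v}$ with $p_e\notin U$, and then by definition $\pi_T\big(V_U(\xi)\big)$ misses the entire component of $T\setminus\{v\}$ containing $w$ (and the ends of $T$ lying beyond it).

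The four cases are: (i) $p,q\in\partial G_v$ for the same $v$: pick disjoint $U_1\ni p$, $U_2\ni q$ in $\overline{\Gamma_v}$; then $V_{U_1}(p)$ and $V_{U_2}(q)$ are disjoint, since a point of $V_{U_i}$ sees $v$ across a single edge glued into $U_i$ and $U_1\cap U_2=\emptyset$. (ii) $p\in\partial G_v$, $q\in\partial G_w$ with $v\neq w$: let $e,e'$ be the first edges of $[v,w]$ and $[w,v]$, with gluing vertices $p_e\in\Gamma_v$, $p_{e'}\in\Gamma_w$; choose $U\ni p$ with $p_e\notin U$ and $U'\ni q$ with $p_{e'}\notin U'$. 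If $z\in V_U(p)\cap V_{U'}(q)$ and $\pi_T(z)\notin\{v,w\}$, then $[v,\pi_T(z)]$ does not start with $e$, so $v$ separates $\pi_T(z)$ from $w$ in $T$, hence $[w,\pi_T(z)]$ starts with $e'$, forcing $p_{e'}\in U'$, a contradiction; the cases $\pi_T(z)\in\{v,w\}$ are immediate from the definitions. (iii) $p=\eta$, $q=\eta'$ in $\partial T$: the rays $[v_0,\eta)$ and $[v_0,\eta')$ agree on exactly $k$ initial edges for some $k$, so for $n>k$ the subtrees $T_n(\eta)$ and $T_n(\eta')$ are vertex-disjoint, whence $\overline{T_n(\eta)}$ and $\overline{T_n(\eta')}$ are disjoint in $T\cup\partial T$ and $V_n(\eta)\cap V_n(\eta')=\emptyset$. (iv) $p=\xi\in\partial G_v$, $q=\eta\in\partial T$: for $n>d(v_0,v)$ the vertex $u_n(\eta)$ satisfies $d(v_0,u_n(\eta))=n>d(v_0,v)$, so $u_n(\eta)$ separates $v$ from every point of $\overline{T_n(\eta)}\setminus\{u_n(\eta)\}$; thus all of these points are seen from $v$ across one fixed edge $f$ incident to $v$, and taking $U\ni\xi$ in $\overline{\Gamma_v}$ avoiding the gluing vertex of $f$ (and using $v\notin\overline{T_n(\eta)}$) gives $V_U(\xi)\cap V_n(\eta)=\emptyset$.

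The only step that is not purely formal is the combinatorial bookkeeping in cases (ii) and (iv): one must check that for a fixed end $\eta$ and all large $n$ the whole basic neighbourhood $\pi_T^{-1}\big(\overline{T_n(\eta)}\setminus\{u_n(\eta)\}\big)$ is visible from $v$ through a \emph{single} edge of $T$ at $v$, so that one choice of $U$ in $\overline{\Gamma_v}$ annihilates all of it — this is exactly where the tree structure of $T$ (the separation property of $u_n(\eta)$) is used, and it is also the place where one must be slightly careful that boundary points of $T$ and of the $T_n(\eta)$ are treated on the same footing as vertices. Everything else — that the candidate separating sets genuinely belong to the declared neighbourhood bases, and that their intersections with $\delta(\Gamma)$ are empty — is routine, and I anticipate no real obstacle, since once Theorem \ref{cpt-metrizablity-hhs-bdry} supplies Hausdorffness of the pieces $\overline{\Gamma_v}$ the situation is formally identical to the one handled in \cite{martin-swiat} (cf.\ also \cite[Theorem 6.17]{martin1}).
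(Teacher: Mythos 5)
Your proposal is correct and follows exactly the route the paper intends: the paper gives no proof of this lemma, simply deferring to \cite[Lemma 3.3]{martin-swiat}, and your case analysis is a faithful instantiation of that argument, with the Hausdorffness of each $\overline{\Gamma_v}$ supplied by Theorem \ref{cpt-metrizablity-hhs-bdry} in place of the Gromov-boundary input. No gaps; the separation mechanism via the single gluing vertex of the lifted edge is precisely the point.
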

\subsection{Equivalence of two topologies on $(G,\SSS)$} From the hierarchical structure of $G$, as a set, we see that $$\partial(G,\SSS)=(\bigsqcup_{v\in T}\partial(G_v,\SSS_v))\cup\partial\hat{\Gamma}.$$ Let $\phi$ be the map as in Lemma \ref{coned-off-lemma}. Hence, it induces a homeomorphism from $\partial\hat{\Gamma}\to\partial T$. Thus, we have a continuous map $\bar{\phi}:\hat{\Gamma}\cup\partial\hat{\Gamma}\to T\cup\partial T$. Let $\mathcal T$ be the topology $\overline{G}:=\Gamma\cup\partial(G,\SSS)$ as defined in Section \ref{2}. Define a natural map $\psi:\overline{G}\to\Gamma\cup\delta(\Gamma)$ in the following manner:

	\begin{equation*}
	\psi(x)=
	\begin{cases}
		x & \text{if } x \in \Gamma\cup(\bigsqcup_{v\in T}\partial G_v),\\
		 \bar{\phi}(x)& \text{if }  x\in \partial\hat{\Gamma}
	\end{cases}
\end{equation*}

\begin{prop}
    $\psi$ is a homeomorphism.
\end{prop}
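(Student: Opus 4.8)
The plan is to show that $\psi$ is a continuous bijection from a compact space onto a Hausdorff space, so that it is automatically a homeomorphism. Bijectivity is immediate from the set-level descriptions $\partial(G,\SSS)=\big(\bigsqcup_{v\in T}\partial(G_v,\SSS_v)\big)\cup\partial\hat\Gamma$ and $\delta(\Gamma)=\big(\bigsqcup_{v\in T}\partial(\Gamma_v,\SSS_v)\big)\sqcup\partial T$, together with the identifications $\partial(G_v,\SSS_v)=\partial(\Gamma_v,\SSS_v)$ and the fact, recorded above, that $\phi$ of Lemma~\ref{coned-off-lemma} induces a homeomorphism $\partial\hat\Gamma\to\partial T$. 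By Theorem~\ref{cpt-metrizablity-hhs-bdry} the source $\overline{G}=\Gamma\cup\partial(G,\SSS)$ is compact, while $\Gamma\cup\delta(\Gamma)$ is Hausdorff (by the preceding lemma, together with the observation that $\Gamma$ is open and the basic boundary neighbourhoods $V_U(\xi)$ and $V_n(\eta)$ can be chosen to miss any prescribed point of $\Gamma$). So the entire content is the continuity of $\psi$. At points of $\Gamma$ this is clear, since $\Gamma$ is an open subspace of both spaces carrying the same subspace topology and $\psi|_{\Gamma}=\mathrm{id}$; and since both topologies are base-point independent, I would place the base point $x_0$ in whatever vertex space is convenient. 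It then remains to check continuity at a boundary point $p$, in the two cases $p\in\partial\hat\Gamma$ and $p\in\partial G_v$, which amounts to producing, from an arbitrary basic neighbourhood of $\psi(p)$ in $\Gamma\cup\delta(\Gamma)$, a neighbourhood of $p$ in $\overline{G}$ mapping into it.

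For $p\in\partial\hat\Gamma$ we have $\psi(p)=\bar\phi(p)=\eta\in\partial T$ and $\mathrm{Supp}(p)=\{\hat\Gamma\}$. Since $\hat\Gamma$ is $\sqsubseteq$-maximal, nothing is orthogonal to it, so $\{\hat\Gamma\}^{\perp}=\emptyset$ and $\overline{S}_q=\{\hat\Gamma\}$ for every remote $q$; unwinding a basic neighbourhood $B_{\{U_{\hat\Gamma}\},\epsilon}(p)$, every condition involving an orthocomplement or a ratio of distances becomes vacuous, and what survives is governed by the single Gromov-boundary neighbourhood $U_{\hat\Gamma}$ of $p$ in $\hat\Gamma\cup\partial\hat\Gamma$: the interior part is $\Gamma\cap U_{\hat\Gamma}$, the non-remote part is $\partial\hat\Gamma\cap U_{\hat\Gamma}$, and — using that $\rho_{\hat\Gamma}^{T_S}$ is the cone-point over $\Gamma_w$ whenever $T_S\in\SSS_w$ — the remote part consists of those boundary points supported in some $\SSS_w$ whose cone-point over $\Gamma_w$ lies in $U_{\hat\Gamma}$. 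Since $\pi_T\circ\psi$ agrees coarsely on $\Gamma$ with $\phi$, sends each $\partial G_w$ to $w$, and restricts to $\bar\phi$ on $\partial\hat\Gamma$, Lemma~\ref{coned-off-lemma} turns a deep enough $U_{\hat\Gamma}$ into a neighbourhood of $\eta$ carried inside $V_n(\eta)=\pi_T^{-1}\big(\overline{T_n(\eta)}\setminus\{u_n(\eta)\}\big)$; this gives continuity at $p$.

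For $p\in\partial G_v$ write $\overline{S}=\mathrm{Supp}(p)\subseteq\SSS_v$. The combinatorial fact to extract from the definition of $\SSS$ is that two domains of $\SSS_v$ are orthogonal in $\SSS$ exactly when they are orthogonal in $(\Gamma_v,\SSS_v)$, a domain of $\SSS_w$ with $w\neq v$ is transverse to every domain of $\SSS_v$, and $\hat\Gamma$ is nested above everything; hence $\overline{S}^{\perp}\subseteq\SSS_v$ is the orthocomplement of $\overline{S}$ inside $\SSS_v$, and $\hat\Gamma\notin\overline{S}_q$ for any $q$. Thus, with $x_0\in\Gamma_v$ and using $\pi_S=\pi_{S,v}\circ\pi_v$ for $S\in\SSS_v$, every neighbourhood condition at $p$ that is internal to $\SSS_v$ coincides verbatim with the corresponding condition for $p$ in $\overline{\Gamma_v}$. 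So, given an open $U\ni p$ in $\overline{\Gamma_v}$, I would fix a basic neighbourhood $B^{\overline{\Gamma_v}}_{\{U_S\},\epsilon}(p)\subseteq U$, shrink its data to $\{U_S'\},\epsilon'$ leaving a coarse buffer (possible since the defining conditions of a basic neighbourhood are stable under bounded perturbation of the projections), and take $N=B^{\overline{G}}_{\{U_S'\},\epsilon'}(p)$. The claim is that $\psi(N)\subseteq V_U(p)=U\cup V_U$. For a point $q$ of $N$ that lies in $\Gamma_v$ or in $\partial G_v$, the internal-to-$\SSS_v$ conditions place it directly in $U$. For any other point $q$ of $N$ — an interior point $x\in\Gamma$ with $\pi_T(x)\neq v$, a remote point supported in some $\SSS_w$ with $w\neq v$, or a remote point in $\partial\hat\Gamma$ — let $e$ be the first edge of $T$ on the way from $v$ toward $\pi_T(\psi(q))$ and $t_e\in\Gamma_v$ the vertex to which the lift of $e$ is glued; then, using $\pi_v(x)\in N_C(t_e)$ (tree-of-spaces structure) in the interior case, and the relative-projection formulas $\rho_U^V=\pi_U(\pi_v(\Gamma_w))$ together with the description of $\rho_U^{\hat\Gamma}$ on cone-points and bounded geodesic image in the remote cases, one shows that the relevant tuple of coordinates is $C$-close to $(\pi_S(t_e))_S$. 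Feeding this into the conditions defining $N$ — and here one uses that $\overline{S}_q=\overline{S}\cup\overline{S}^{\perp}$ and that $a^p_S=0$ for $S\in\overline{S}^{\perp}$, which is what converts the ratio conditions of the remote part into the ratio conditions of the interior part — one finds that $t_e$ satisfies the interior conditions for $p$ in $\overline{\Gamma_v}$ up to the buffer, so $t_e\in U$ and hence $\psi(q)\in V_U$. This establishes continuity at $p$, and with it the proposition.

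The step I expect to be the main obstacle is the last one: showing that the boundary projection $\partial\pi_{\overline{S}}(q)$ of a remote point $q$ whose support lies across the Bass--Serre tree, or at infinity in $T$, is coarsely the projection-tuple of the single gluing vertex $t_e\in\Gamma_v$. The geometric content is that the entire branch of $T$ at $v$ determined by $e$ projects, inside $\Gamma$, to a bounded neighbourhood of $t_e$ in $\Gamma_v$, so that $\pi_v(\Gamma_w)$ is coarsely $\{t_e\}$ for every vertex $w$ in that branch. Combining this with the explicit formulas of the HHS structure on $\Gamma$ (the transverse-pair formula $\rho_U^V=\pi_U(\pi_v(\Gamma_w))$ between distinct vertex spaces, the identity $\rho_{\hat\Gamma}^U=$ cone-point over $\Gamma_v$, and the recipe for $\rho_U^{\hat\Gamma}$ on cone-points) and applying bounded geodesic image to the quasigeodesic ray used to define $\partial\pi_{\overline{S}}$ in the nested case will give the claim; the delicate point is to verify that all the coarseness constants are uniform, so that they are genuinely absorbed by the buffer fixed at the outset.
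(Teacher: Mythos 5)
Your proposal is correct and follows essentially the same route as the paper: reduce to continuity via compact-to-Hausdorff, handle points of $\Gamma$ trivially, and then treat the two boundary cases by matching the remote/non-remote/interior parts of a basic HHS-boundary neighbourhood against the neighbourhoods $V_U(\xi)$ and $V_n(\eta)$, with the key geometric input being that everything across a given edge of $T$ projects coarsely to the gluing vertex in $\Gamma_v$. The paper states this matching as an equality of neighbourhoods rather than an inclusion after shrinking, but the content is the same, and your explicit attention to the uniformity of the coarseness constants is if anything more careful than the paper's treatment.
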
 

\begin{proof}
    Clearly, $\psi$ is a bijection.
Since $\overline{G}, \overline{\Gamma}$ are compact Hausdorff spaces, to show that $\psi$ is a homeomorphism, it is sufficient to show that $\psi$ is continuous. It is continuous on the points of $\Gamma$. Thus, we have the following two cases to consider:

{\bf Case 1.} Let $p\in \partial G_v$ for some $v\in T$. Let $p=\sum_{S\in\overline{S}}a_S^pp_S$, where $\overline{S}$ is the support set of $p$ in $\SSS_v$. Let $\epsilon>0$ and $U_S$ be a neighborhood of $p_S$ in $\partial\CC S$ such that $\BB_{\{U_S\},\epsilon}^v(p)$ is a neighborhood of $p$ in $\overline{\Gamma_v}$. For $U:=\BB_{\{U_S\},\epsilon}^v(p)$, let $V_U(p)=U\cup V_U$ be a neighborhood of $p$ in $\overline{\Gamma}$. We show that the neighborhood $\BB_{\{U_S\},\epsilon}(p)$ of $p$ in $\overline{G}$ satisfies $\psi(\BB_{\{U_S\}}(p))=V_U(p)$.

{\bf Remote part.} Note that $\BB_{\{U_S\},\epsilon}^{rem,v}(p)\subset\BB_{\{U_S\},\epsilon}^{rem}(p)$. Every point in $(\bigsqcup_{v\neq w\in T}\partial G_w)\cup\partial\hat{\Gamma}$ is remote with respect to $\overline{S}$. From the definition of neighborhoods and the hierarchical structure of $\Gamma$, it follows that, for $w\neq v$, a point $q\in\partial G_w$ belongs to $\BB_{\{U_s\},\epsilon}^{rem}(p)$ if and only the vertex of the lift of $e$ attached to $\Gamma_v$ belongs to $\BB_{\{U_S\},\epsilon}^{int,v}(p)\subset U$, where $e$ is the first edge of the geodesic segment $[v,w]\subset T$. Hence, such $q\in\BB_{\{U_s\},\epsilon}^{rem}(p)$ if and only $q\in V_U$. Similarly, a point $\xi\in \hat{\Gamma}$ is in $\BB_{\{U_s\},\epsilon}^{rem}(p)$ if and only if the vertex of the lift of $e$ attached to $\Gamma_v$ belongs to $\BB_{\{U_S\},\epsilon}^{int,v}(p)\subset U$, where $e$ is the first edge of the geodesic ray $[v,\phi(\xi))\subset T$. Hence, such $\xi\in \BB_{\{U_S\},\epsilon}^{rem}(p)$ if and only $\phi(\xi)\in V_U$.

{\bf Non-remote part.} Since non-remote points are in $\partial G_v$, it is clear that $\BB_{\{U_S\},\epsilon}^{non}(p)=\BB_{\{U_S\},\epsilon}^{non,v}(p)$.

{\bf Interior part.} The interior part of $\BB_{\{U_S\},\epsilon}^v(p)$ lies in the interior part of $\BB_{\{U_S\},\epsilon}(p)$. Again, from the definition of neighborhoods and the hierarchical structure of $\Gamma$, it follows that, for $w\neq v$, a point $q\in\Gamma_w$ belongs to $\BB_{\{U_s\},\epsilon}^{int}(p)$ if and only the vertex of the lift of $e$ attached to $\Gamma_v$ belongs to $\BB_{\{U_S\},\epsilon}^{int,v}(p)\subset U$, where $e$ is the first edge of the geodesic segment $[v,w]\subset T$. Hence, such $q\in\BB_{\{U_s\},\epsilon}^{int}(p)$ if and only $q\in V_U$. 

From the above discussion, it follows that $\psi(\BB_{\{U_S\},\epsilon}(p))=V_U(p)$ and hence $\psi$ is continuous at $p$.

{\bf Case 2.} Let $\xi\in\partial\hat{\Gamma}$ and let $\bar{\phi}(\xi)=\eta\in\partial T$. Let $V_n(\eta)$ be a neighborhood of $\eta$ in $\overline{\Gamma}$. Choose a neighborhood $U\subset\hat{\Gamma}\cup\partial\hat{\Gamma}$ such that $\bar{\phi}(U)\subseteq T_n(\eta)$. Note that Supp$(\xi)=\{\hat{\Gamma}\}$ and each point in $\bigsqcup_{v\in T}\partial G_v$ is a remote point with respect to $\{\hat{\Gamma}\}$. We use $U$ to construct the required neighborhood of $\xi$ in $(\overline{G},\mathcal{T})$. In each of the following parts, the conditions involving $\epsilon$ are vacuous. Thus, we remove the dependency of the neighborhood on $\epsilon$.

{\bf Remote part.} Note that, for each $v\in T$, each domain in $\SSS_v$ is nested in $\hat{\Gamma}$ and each element in $\partial (G_v,\SSS_v)$ is remote with respect to $\{\hat{\Gamma}\}$. Also, for $V\in\SSS_v$, $\rho_{\hat{\Gamma}}^V$ is the cone-point corresponding to $\Gamma_v$. Thus, the remote part $$\BB_{U}^{rem}(\xi)=\{\xi'\in\bigsqcup_{v\in T}\partial G_v:\text{ if $\xi'\in \partial G_v$ then the cone-point corresponding to $\Gamma_v$ is in } U\}.$$

{\bf Non-remote part.} Elements in $\partial\hat{\Gamma}$ are the only non-remote points in $\partial (G,\SSS)$. Thus, the non-remote part 
$$\BB_{U}^{non}(\xi)=\{\xi'\in\partial\hat{\Gamma}:\xi'\in U\}.$$

{\bf Interior part.} $\BB_U^{int}(\xi)=\{x\in \Gamma: x\in U\}.$

Let $\BB_{U}(\xi)=\BB_{U}^{rem}(\xi)\cup\BB_{U}^{non}(\xi)\cup\BB_{U}^{int}(\xi)$. Then, from the definition of the neighborhood in $(\overline{\Gamma},\tau)$, we see that $\psi(\BB_U(\xi))\subseteq V_n(\eta)$. Thus, $\psi$ is continuous at $\xi$.
\end{proof}
\section{Homeomorphism types of hierarchical boundaries of free products}
Suppose $(G_1,\SSS_1), (G_2,\SSS_2)$ are two HHGs, and $\Gamma_1,\Gamma_2$ are Cayley graphs of $G_1,G_2$, respectively. By Theorem \ref{cpt-metrizablity-hhs-bdry}, $\overline{\Gamma_1}=\Gamma_1\cup\partial(G_1,\SSS_1),\overline{\Gamma_2}=\Gamma_2\cup\partial(G_2,\SSS_2)$ are compact metrizable. Then, one has induced metrics $d_{\triangle_1}$ and $d_{\triangle_2}$ on $G_1\cup\partial(G_1,\SSS_1)$ and $G_2\cup\partial(G_2,\SSS_2)$, respectively. The following lemma is an analogue of \cite[Lemma 4.2]{martin-swiat} in the context of HHGs, which helps to prove Theorem \ref{homeo free product}. In particular, this lemma helps us to define an isomorphism between Bass--Serre trees of free products given in Theorem \ref{homeo free product}.

\begin{lemma}\label{main lemma}
	Let $\partial f:\partial(G_1,\SSS_1)\to\partial (G_2,\SSS_2)$ be a homeomorphism. Then, there is a bijection (need not be a homomorphism) $f:G_1\to G_2$ such that $f(1)=1$ and $f\cup \partial f:G_1\cup\partial(G_1,\SSS_1)\to G_2\cup\partial(G_2,\SSS_2)$ is a homeomorphism.
\end{lemma}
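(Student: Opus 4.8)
The plan is to build the bijection $f$ by transporting the topology through $\partial f$ and using compactness of the two spaces $\overline{\Gamma_i}$ together with the fact that each $G_i$ is a countable, uniformly discrete, dense subset of $\overline{\Gamma_i}$ whose complement is exactly $\partial(G_i,\SSS_i)$. More precisely, I would first recall that by Theorem \ref{cpt-metrizablity-hhs-bdry} the spaces $\overline{\Gamma_1}$ and $\overline{\Gamma_2}$ are compact metrizable, that $\Gamma_i$ is open and dense, and that $\partial(G_i,\SSS_i)=\overline{\Gamma_i}\setminus\Gamma_i$ is therefore closed with empty interior. Since $G_i$ is uniformly discrete in its own metric but accumulates only on $\partial(G_i,\SSS_i)$, every neighborhood of a boundary point contains infinitely many group elements (this is essentially Lemma \ref{dense-lemma}, via the retraction $\pi$), and no point of $G_i$ is a limit of other points of $G_i$.

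The construction of $f$ then proceeds by a back-and-forth enumeration. Fix enumerations $G_1=\{g_0=1,g_1,g_2,\dots\}$ and $G_2=\{h_0=1,h_1,h_2,\dots\}$, and set $f(1)=1$. At stage $n$, having defined a finite partial bijection, I would extend it to include the next unused element $g_k$ of $G_1$ in its domain and the next unused element $h_\ell$ of $G_2$ in its range. To choose $f(g_k)$: pick a small metric ball $B(g_k,r)$ in $\overline{\Gamma_1}$ that is disjoint from the finitely many already-used points of $G_1$ and contains no boundary point other than possibly none — shrink $r$ so $B(g_k,r)\cap G_1=\{g_k\}$. Its ``boundary footprint'' under $\partial f$ is empty, so there is nothing to match on the boundary; instead I use that $\partial f$ extends continuously nowhere canonically, so the cleaner route is: choose $f(g_k)$ to be any not-yet-used element of $G_2$ lying in a ball $B(y,s)\subset\overline{\Gamma_2}$ whose intersection with $\partial(G_2,\SSS_2)$ is as close as we like (in Hausdorff distance on the relevant closed sets) to $\partial f$ applied to the corresponding piece of $\overline{\Gamma_1}$. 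To make this precise and to guarantee $f\cup\partial f$ is continuous, I would instead organize the induction so that at stage $n$ we control \emph{all} pairs $(g_i,g_j)$, $i,j\le n$: we require $d_{\triangle_2}(f(g_i),f(g_j))$ to approximate, to within $2^{-n}$, the quantity obtained by pushing the geodesic/ball data forward through $\partial f$, using uniform continuity of $\partial f$ on the compact set $\partial(G_1,\SSS_1)$. Symmetrically we do the back step for $h_\ell$. Because $\partial f$ is a homeomorphism of compact metric spaces, it is uniformly continuous with uniformly continuous inverse, and this is what forces the two inductions to be compatible.

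Continuity of $F:=f\cup\partial f$ at a point of $\partial(G_1,\SSS_1)$ is then checked by hand: given $\xi\in\partial(G_1,\SSS_1)$ and a neighborhood $W$ of $\partial f(\xi)$ in $\overline{\Gamma_2}$, use uniform continuity to find $\delta$ with $\partial f\big(B(\xi,\delta)\cap\partial(G_1,\SSS_1)\big)\subset W$; then, since the group elements $g_i$ with $g_i\in B(\xi,\delta/2)$ all have, for $i$ large, their images $f(g_i)$ forced by the $2^{-n}$-approximation to lie within $W$, we get $F(B(\xi,\delta/2))\subset W$ for a possibly smaller $\delta$. Since $F$ is a continuous bijection from the compact space $\overline{\Gamma_1}$ to the Hausdorff space $\overline{\Gamma_2}$, it is automatically a homeomorphism, and $f=F|_{G_1}$ is a bijection onto $G_2$ with $f(1)=1$ as required.

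The main obstacle is precisely bookkeeping the approximation so that the forward and backward steps of the enumeration never conflict and so that the resulting map is \emph{continuous}, not merely a bijection — the point being that an arbitrary bijection $G_1\to G_2$ won't extend continuously, and one has to let $\partial f$ (through its uniform continuity modulus) dictate where each $g_k$ goes with enough precision at every finite stage. I expect this is where the real work lies; once the inductive invariant ``all distances $d_{\triangle_2}(f(g_i),f(g_j))$ for $i,j\le n$ agree with the $\partial f$-transported values up to $2^{-n}$, and symmetrically for $f^{-1}$'' is set up correctly, continuity of $F$ and $F^{-1}$ falls out of compactness exactly as in \cite[Lemma 4.2]{martin-swiat}.
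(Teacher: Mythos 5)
Your overall strategy (back-and-forth enumeration, then ``continuous bijection from compact to Hausdorff is a homeomorphism'') is the same as the paper's, but the core of the construction --- the rule that decides \emph{where} each $g_k$ is sent --- is left unresolved, and the inductive invariant you propose in its place does not work. The quantity ``$d_{\triangle_2}(f(g_i),f(g_j))$ approximates the $\partial f$-transported value to within $2^{-n}$'' is not well defined: $g_i,g_j$ are interior points, $\partial f$ is only defined on $\partial(G_1,\SSS_1)$, and there is no canonical way to push $d_{\triangle_1}(g_i,g_j)$ through $\partial f$. Even if one fixed some such transport, controlling pairwise distances among the images is neither necessary nor sufficient for what you actually need, namely that $f(g_k)\to\partial f(\xi)$ whenever $g_k\to\xi\in\partial(G_1,\SSS_1)$. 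Your earlier attempt (matching the ``boundary footprint'' of a small ball $B(g_k,r)$) also fails, since for $r$ small that ball meets the boundary in the empty set and the condition is vacuous. You correctly sense that ``this is where the real work lies,'' but the work is not done.

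The missing idea is the one the paper uses: the nearest-point retraction $\pi:G_1\to\partial(G_1,\SSS_1)$ from Equation (\ref{equation}) (any choice of $\xi$ with $d_{\triangle_1}(g,\xi)=d_{\triangle_1}(g,\partial(G_1,\SSS_1))$). The target of $g_k$ is simply $\partial f(\pi(g_k))\in\partial(G_2,\SSS_2)$, and the forward step requires $d_{\triangle_2}(f(g_k),\partial f(\pi(g_k)))<1/k$, which is achievable because $G_2$ is dense in $\overline{G_2}$; the backward step symmetrically assigns to $h_k$ a preimage $g$ with $d_{\triangle_2}(h_k,\partial f(\pi(g)))<d_{\triangle_2}(h_k,\partial(G_2,\SSS_2))+1/k$, using that $\pi(G_1)$ is dense in $\partial(G_1,\SSS_1)$ (Lemma \ref{dense-lemma}). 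Continuity of $f\cup\partial f$ at $\xi$ then follows from a three-step chain with no uniform continuity needed: $g_k\to\xi$ forces $d_{\triangle_1}(g_k,\pi(g_k))\to 0$, hence $\pi(g_k)\to\xi$, hence $\partial f(\pi(g_k))\to\partial f(\xi)$ by continuity of $\partial f$, hence $f(g_k)\to\partial f(\xi)$ by the defining inequality. You mention $\pi$ in passing when citing Lemma \ref{dense-lemma}, but it never enters your construction; putting it at the center is what closes the gap.
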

\begin{proof}
	For $i=1,2$, we write $\partial G_i$ in place of $\partial(G_i,\SSS_i)$. Let $\pi:G_1\to\partial G_1$ as defined in Equation (\ref{equation}). Order the elements of $G_1\setminus\{1\}$ and $G_2\setminus\{1\}$ into sequences $\{g_k\}_{k\in\mathbb N}$ and $\{h_k\}_{k\in\mathbb{N}}.$ Define $f(1)=1$. To get the required $f$, iterate the following two steps alternatively.
	
	{\bf Step 1.} Suppose $k$ is the smallest number for which $f(g_k)$ is not yet defined. Since $G_2$ is dense in $G_2\cup\partial G_2$, choose some $l\in\mathbb N$ such that $h_l$ is not an image of any $g_i$ under the map $f$ and $$d_{\triangle_2}(h_l,\partial f(\pi(g_k)))<\dfrac{1}{k}.$$ 
	Then, define $f(g_k)=h_l$.
	
	{\bf Step 2.} Suppose $k$ is the smallest number for which $h_k$ is not chosen as the image of any $g\in G_1$ under $f$. Since, by Lemma \ref{dense-lemma}, $\pi(G_1)$ is dense in $\partial G_1$, $\partial f(\pi(G_1))$ is dense in $\partial (G_2)$. Choose $g\in G_1\setminus\{1\}$ such that $f$ has not yet been defined on $g$ and $$d_{\triangle_2}(h_k,\partial f(\pi(g)))<d_{\triangle_2}(h_k,\partial G_2)+\dfrac{1}{k}.$$ Then, define $f(g)=h_k$.
	
	By performing the above two steps alternatively, we see that $f$ is a bijection. Since $G_1 \cup \partial G_1$ and $G_2 \cup\partial G_2$ are compact and Hausdorff, to prove that $f\cup \partial f$ is a homeomorphism, it is sufficient to prove that $f\cup \partial f$ is continuous.
	
	Clearly, $f\cup\partial f$ is continuous at the points of $G_1$. Now, consider a sequence $\{g_k\}$ in $G_1$ that converges to $\xi\in\partial G_1$ (in the original topology on $\partial G_1$). Since the topology induced by $d_{\triangle_1}$ on $\partial G_1$ is same as the original topology on $\partial G_1$, $\lim_{k\to\infty} d_{\triangle_1}(g_k,\xi)=0$. This, in turn, implies that $\lim_{k\to\infty}d_{\triangle_1}(g_k,\partial G_1)=0$. Thus, 
	$\lim_{k\to\infty}d_{\triangle_1}(g_k,\pi(g_k))=0$ by the definition of $\pi$. Using triangle inequality, we see that $\pi(g_k)$ converges to $\xi$. Thus, by continuity of $\partial f$, $\partial f(\pi(g_k))$ converges to $f(\xi)$. From the definition of $f$, it follows that $\lim_{k\to\infty} d_{\triangle_2}(\partial f(\pi(g_k)),f(g_k))=0$. This implies that $f(g_k)$ converges to $f(\xi)$. Hence, $f\cup \partial f$ is continuous.
\end{proof}

We immediately have the following:
\begin{cor}\label{main cor}
	Let $\partial f:\partial G_1\to\partial G_2$ be a homeomorphism. Then, there exists a bijection $f:G_1\to G_2$ such that the following holds:
	
	Let $\xi\in \partial G_2$ and $U_2$ be an open neighborhood of $\xi$ in $\overline{\Gamma_2}$. Then, there exists a neighborhood $U_1$ of $(\partial f)^{-1}(\xi)$ in $\overline{\Gamma_1}$ such that $f(G\cap U_1)\subset U_2$.
\end{cor}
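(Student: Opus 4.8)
The plan is to treat the corollary as an immediate consequence of Lemma~\ref{main lemma}. I would take $f\colon G_1\to G_2$ to be exactly the bijection produced there, so that $f(1)=1$ and, crucially, $f\cup\partial f\colon G_1\cup\partial G_1\to G_2\cup\partial G_2$ is a homeomorphism (here, as in the proof of that lemma, $\partial G_i$ abbreviates $\partial(G_i,\SSS_i)$, and $G_i$ denotes the vertex set of $\Gamma_i$, carrying the subspace topology inherited from $\overline{\Gamma_i}=\Gamma_i\cup\partial G_i$, which agrees with the one induced by the restriction of $d_{\triangle_i}$). Everything then reduces to unwinding continuity of $f\cup\partial f$ at a boundary point together with the definition of the subspace topology, the one subtlety being that $f$ is only defined on the $0$-skeleton $G_i$, whereas the neighbourhoods in the statement live in the full compactified Cayley graphs $\overline{\Gamma_i}$.

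Concretely, fix $\xi\in\partial G_2$ and an open neighbourhood $U_2$ of $\xi$ in $\overline{\Gamma_2}$, and set $\eta:=(\partial f)^{-1}(\xi)\in\partial G_1$. First I would pass to $U_2':=U_2\cap(G_2\cup\partial G_2)$, which is an open neighbourhood of $\xi$ in the subspace $G_2\cup\partial G_2$. Since $f\cup\partial f$ is continuous, $W:=(f\cup\partial f)^{-1}(U_2')$ is open in $G_1\cup\partial G_1$ and contains $\eta$ (because $(f\cup\partial f)(\eta)=\partial f(\eta)=\xi\in U_2'$). By the definition of the subspace topology there is an open set $U_1\subseteq\overline{\Gamma_1}$ with $U_1\cap(G_1\cup\partial G_1)=W$; in particular $\eta\in U_1$, so $U_1$ is a neighbourhood of $(\partial f)^{-1}(\xi)$ in $\overline{\Gamma_1}$. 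Finally, for any $g\in G_1\cap U_1$ we have $g\in U_1\cap(G_1\cup\partial G_1)=W$, hence $f(g)=(f\cup\partial f)(g)\in U_2'\subseteq U_2$; this gives $f(G_1\cap U_1)\subseteq U_2$, as required.

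I do not expect any genuine obstacle here, since the whole content is carried by Lemma~\ref{main lemma}. The only point I would be careful to spell out is the bookkeeping with subspace topologies: $f\cup\partial f$ is a homeomorphism of the pairs $G_i\cup\partial G_i$, not of the larger spaces $\overline{\Gamma_i}$, so one must intersect the given neighbourhood with $G_2\cup\partial G_2$ before pulling it back, and then re-inflate the pulled-back set to an open subset of $\overline{\Gamma_1}$. Once that is done, the inclusion $f(G_1\cap U_1)\subseteq U_2$ is immediate, and no properties of the HHG structure $\SSS_i$ beyond metrizability of $\overline{\Gamma_i}$ (Theorem~\ref{cpt-metrizablity-hhs-bdry}) are needed.
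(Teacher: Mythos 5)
Your proposal is correct and follows essentially the same route as the paper: take $f$ to be the bijection from Lemma~\ref{main lemma}, pull the neighbourhood back through the homeomorphism $f\cup\partial f$ of $G_1\cup\partial G_1\to G_2\cup\partial G_2$, and then enlarge the resulting open subset of $G_1\cup\partial G_1$ to an open subset of $\overline{\Gamma_1}$. The only (immaterial) difference is in that last step: you invoke the definition of the subspace topology directly, whereas the paper gets the same open set $U_1$ by observing that the complement of the pulled-back set is compact in $\overline{\Gamma_1}$ and separating the point from it.
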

\begin{proof}
	Let $f$ be the bijection as in Lemma \ref{main lemma} such that $\bar{f}:=f\cup \partial f:G_1\cup\partial G_1\to G_1\cup\partial G_2$ is a homeomorphism. We show that $f$ is the required bijection. Since $U_2$ is an open neighborhood of $\xi\in \partial G_2$, $V:=\bar{f}^{-1}(U_2)$ is an open neighborhood of $(\partial f)^{-1}(\xi)$ in $G_1\cup\partial G_1$. Thus, $(G_1\cup\partial G_1)\setminus V$ is closed in $G_1\cup\partial G_1$ and $G_1\cup\partial G_1$ is closed in $\overline{\Gamma_1}$, $K:=\overline{\Gamma_1}\setminus V$ is compact in $\overline{\Gamma_1}$. Since $(\partial f)^{-1}(\xi)\in \overline{\Gamma_1}\setminus K$, there exists an open neighborhood $U_1$ of $f^{-1}(\xi)$ in $\overline{\Gamma_1}$ such that $U_1$ is disjoint from $K$, and hence $U_1\subset \bar{f}^{-1}(U_2)$. Now, it is clear that $f(G_1\cap U_1)\subset U_2$.
\end{proof}

The remainder of this section is devoted to proving the following theorem.
\begin{theorem}\label{homeo free product}
	Suppose $G_1=A_1\ast B_1$ and $G_2=A_2\ast B_2$ are two free products of HHGs. For $i=1,2$, let $\SSS_{A_i}$ and $\SSS_{B_i}$ be HHG structures on $A_i$ and $B_i$, respectively. Let $\SSS_1$ and $\SSS_2$ be HHG structures on $G_1$ and $G_2$ as described in Subsection \ref{4.2}. If $\partial(A_1,\SSS_{A_1})$ is homeomorphic to $\partial(A_2,\SSS_{B_2})$ and $\partial(B_1,\SSS_{B_1})$ is homeomorphic to $\partial(B_2,\SSS_{B_2})$, then $\partial(G_1,\SSS_1)$ is homeomorphic to $\partial(G_2,\SSS_2)$.
\end{theorem}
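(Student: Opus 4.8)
The strategy is to adapt the argument of Martin--\'{S}wi\polhk{a}tkowski \cite{martin-swiat} from Gromov boundaries to hierarchical boundaries, working throughout with the combinatorial models $\Gamma_1,\Gamma_2$ and the homeomorphisms $\psi$ of Section \ref{4} that identify $\partial(G_i,\SSS_i)$ with $\delta(\Gamma_i)$; thus it suffices to produce a homeomorphism $\overline{\Gamma_1}\to\overline{\Gamma_2}$ carrying $\delta(\Gamma_1)$ onto $\delta(\Gamma_2)$. Write $\overline{A_i}=A_i\cup\partial(A_i,\SSS_{A_i})$, $\overline{B_i}=B_i\cup\partial(B_i,\SSS_{B_i})$, and let $\partial f_A,\partial f_B$ be the given homeomorphisms of the free-factor boundaries. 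By Lemma \ref{main lemma} there are bijections $f_A\colon A_1\to A_2$ and $f_B\colon B_1\to B_2$ with $f_A(1)=f_B(1)=1$ for which $f_A\cup\partial f_A\colon\overline{A_1}\to\overline{A_2}$ and $f_B\cup\partial f_B\colon\overline{B_1}\to\overline{B_2}$ are homeomorphisms, and Corollary \ref{main cor} supplies the shrinking-neighborhood property of these maps that the continuity argument will rely on.

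First I would construct, recursively on the distance from a fixed base vertex, a type-preserving isomorphism $\Phi\colon T_1\to T_2$ of the Bass--Serre trees together with a compatible family of homeomorphisms $F_v\colon\overline{\Gamma_v}\to\overline{\Gamma_{\Phi(v)}}$. Root $T_i$ at a type-$A$ vertex $v_0^i$, set $\Phi(v_0^1)=v_0^2$ and, after translating both vertex spaces so as to be centered at the identity, $F_{v_0^1}=f_A\cup\partial f_A$. Given $\Phi$ and the $F_v$ on the ball of radius $n$, for each vertex $v$ at distance $n$ translate so that the edge of $v$ pointing toward the root corresponds to $1$ in the relevant factor; the remaining edges at $v$ then form a copy of $A_1$ or $B_1$, and I extend $\Phi$ over these edges and over the corresponding distance-$(n+1)$ vertices using $f_A$ or $f_B$, defining the new $F_w$ to be the appropriate $G_2$-translate of $f_A\cup\partial f_A$ or $f_B\cup\partial f_B$. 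Since $f_A(1)=f_B(1)=1$ the choices agree on the shared root-ward edges, so the construction is consistent, and by observation (1) of Subsection \ref{4.1} it respects the correspondence between edges of $T_i$ and the edges of $\Gamma_i$ joining distinct Cayley graphs. Being a tree isomorphism, $\Phi$ induces a homeomorphism $\partial\Phi\colon\partial T_1\to\partial T_2$.

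Now define $F\colon\overline{\Gamma_1}\to\overline{\Gamma_2}$ by: $F=F_v$ on $\Gamma_v\cup\partial G_v$; $F=\partial\Phi$ on $\partial T_1$; and $F=\Phi$ on the edges of $\Gamma_1$ joining distinct Cayley graphs. Then $F$ is a bijection, it commutes with the projections $\overline{\Gamma_i}\to T_i\cup\partial T_i$ via $\Phi\cup\partial\Phi$, and $F(\delta(\Gamma_1))=\delta(\Gamma_2)$. Since $\overline{\Gamma_i}$ is compact metrizable (Theorem \ref{cpt-metrizablity-hhs-bdry} via $\psi$) and the construction is symmetric in the two indices, it is enough to prove that $F$ is continuous. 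At an interior point of $\Gamma_1$ this is immediate, as such a point has a neighborhood meeting only one vertex space and finitely many edges. At $\eta\in\partial T_1$ one checks that $F$ maps the basic neighborhood $V_n(\eta)$ onto $V_n(\partial\Phi(\eta))$: because $\Phi$ is a genuine tree isomorphism fixing the base vertex, it carries the subtree $T_n(\eta)$ exactly onto $T_n(\partial\Phi(\eta))$, and $F$ covers this via the projections. The essential case is $\xi\in\partial G_v\subset\delta_{Stab}(\Gamma_1)$: given a basic neighborhood $V_{U'}(F(\xi))$ of the image, pull $U'$ back through the homeomorphism $F_v$ and then shrink it with Corollary \ref{main cor} to a neighborhood $U$ of $\xi$ in $\overline{\Gamma_v}$ small enough that $\Phi$ sends every edge at $v$ whose gluing vertex lies in $U$ into an edge at $\Phi(v)$ glued into $U'$; then $F(V_U(\xi))\subseteq V_{U'}(F(\xi))$, giving continuity at $\xi$. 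Hence $F$ restricts to a homeomorphism $\delta(\Gamma_1)\to\delta(\Gamma_2)$, and via $\psi$ we conclude $\partial(G_1,\SSS_1)\cong\partial(G_2,\SSS_2)$.

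I expect the main obstacle to be exactly this last point: arranging the recursion so that the bijections $f_A,f_B$ — which are not homomorphisms and become continuous only after adjoining the boundary — mesh correctly with the tree combinatorics, and invoking Corollary \ref{main cor} in the right place to bridge the gap between ``$f_A$ is merely a set bijection $A_1\to A_2$'' and ``edges glued near $\xi$ must be sent to edges glued near $F(\xi)$.'' Once this bookkeeping is in place, the remaining verifications are a direct transcription of \cite{martin-swiat}, and the multi-factor version needed for Theorem \ref{main-application-1} follows by running the same construction over free products of finitely many factors.
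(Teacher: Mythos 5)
Your proposal follows essentially the same route as the paper's proof: Lemma \ref{main lemma} and Corollary \ref{main cor} supply the bijections $f_A,f_B$ fixing the identity, these induce the Bass--Serre tree isomorphism (your recursive, rooted construction produces exactly the paper's normal-form map $\iota$), and continuity of the resulting map on $\delta(\Gamma_1)$ is verified in the same two cases (stabilizer boundary points via Corollary \ref{main cor}, points of $\partial T_1$ via the tree isomorphism), with the compact-Hausdorff argument closing the proof. The one place you overreach is in asking for a homeomorphism of the full compactifications $\overline{\Gamma_1}\to\overline{\Gamma_2}$: since $f_A$ and $f_B$ are mere set bijections of the vertex groups (not graph maps), they admit no continuous extension over the edges of the Cayley graphs, so your $F$ is not actually defined or continuous on the interiors of the vertex spaces --- but this is harmless, because, as in the paper, it suffices to define $F$ only on $\delta(\Gamma_1)$ and show it is a continuous bijection onto $\delta(\Gamma_2)$.
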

{\bf Notation:} Let $\partial f_1:\partial A_1\to\partial A_1$ and $\partial f_2:\partial B_1\to\partial B_2$ be the fixed homeomorphism of the hierarchical boundaries. 
We denote by $q:\partial A_1\cup\partial B_1\to\partial A_2\cup\partial B_2$ the homeomorphism induced by $\partial f_1$ and $\partial f_2$. 
Let $f_1$ and $f_2$ be the bijections provided by Lemma \ref{main lemma}. Let $T_1$ and $T_2$ be the Bass--Serre trees of $A_1\ast B_1$ and $A_2\ast B_2$, respectively. We denote by $\Gamma_1$ and $\Gamma_2$ the trees of Cayley graphs of $G_1$ and $G_2$, respectively as described in Subsection \ref{4.1}. Finally, $\delta(\Gamma_1)$ and $\delta(\Gamma_2)$ denote the spaces as constructed in Subsection \ref{4.3}.

\vspace{.2cm}

{\bf Isomorphism between $T_1$ and $T_2$.} For $i=1,2$, let $\tau_i=[v_i,u_i]$ be the edge of $T_i$ such that $v_i$ and $u_i$ are stabilized by $A_i$ and $B_i$, respectively. Recall that each non-trivial element $g\in G_1$ can be expressed uniquely, in {\em reduced form}, as $g=a_1b_1,\dots,b_n$, with $a_j\in A_1\setminus\{1\},b_j\in B_1\setminus\{1\}$, allowing also that $a_1=1$ and that $b_n=1$. Define a map $\iota:T_1\to T_2$ that maps any edge $a_1b_1,\dots,a_nb_n\tau_1$ to the edge $f_1(a_1)f_2(b_1),\dots,f_1(a_n)f_2(b_n)\tau_2$. It is easy to check that $\iota$ is an isomorphism such that $\iota(\tau_1)=\tau_2$.

Note that if $\xi\in\partial A_1$ and $g\in G_1$, then the set of all representatives of $[g,\xi]\in\delta_{Stab}(\Gamma_1)$ is of the form $(ga,a^{-1}\xi)$ where $a\in A_1$. Similarly, if $\xi\in\partial B_1$ and $g\in G$ then the set of all representatives are of the form $(gb,b^{-1}\xi)$ for $b\in B_1$. When $\xi\in\partial A_1$, we choose a unique $ga=a_1b_1,...,a_nb_n$ for which $n$ is the smallest possible (in this case we have $b_n\neq 1$). When $\xi\in \partial B_1$, we choose a unique $gb=a_1b_1,...,a_nb_n$ for which $b_n=1$. These representatives of an element $[g,\xi]\in\delta_{Stab}(\Gamma_1)$ are called {\em reduced representatives.}
\vspace{.2cm}

{\em Proof of Theorem \ref{homeo free product}.} To prove that $\partial G_1$ is homeomorphic to $\partial G_2$, it is sufficient to prove that $\delta(\Gamma_1)$ is homeomorphic to $\delta(\Gamma_2)$. We define a map $F:\delta(\Gamma_1)\to\delta(\Gamma_2)$ in the following manner:

(1) Let $[g,\xi]\in\delta_{Stab}(\Gamma_1)$ and let $(g,\xi)$ be its reduced representative. Write $g=a_1b_1,\dots,a_nb_n$. Define $$F([a_1b_1,\dots,a_nb_n,\xi])=[f_1(a_1)f_2(b_1),\dots,f_1(a_n)f_2(b_n),q(\xi)].$$

(2) Let $\eta\in \partial T_1$. We can represent it as an infinite word $\eta=a_1b_1,\dots$ such that, for each $n$, the subword consisting of its first $n$ letters corresponds to the $n$-th edge of the geodesic from $v_1$ to $\eta$ via the correspondence $g\to g.\tau_1$. Define $$F((a_1b_1,\dots))=f_1(a_1)f_2(b_1),\dots$$
where the infinite word on the right gives a geodesic ray in $T_2$ starting from $v_2$. 

Observe that the restriction of $F$ to $\partial T_1$ is the same as the map $\partial T_1\to\partial T_2$ induced from the isomorphism $\iota:T_1\to T_2$.

{\bf Claim:} The map $F$ is a homeomorphism.

From the definition of $F$, it follows that $F$ is a bijection. To show that $F$ is a homeomorphism, it is sufficient to prove that $F$ is continuous. There are two cases to be considered:

{\bf Case 1.} Let $\xi\in \delta_{Stab}(\Gamma_1)$ and let $v$ be the vertex of $T_1$ such that $\xi\in \partial G_v$. Let $U_2$ be an open neighborhood of $F(\xi)$ in $\overline{\Gamma_{\iota{(v)}}}$. By Corollary \ref{main cor}, we have an open neighborhood $U_1\subset\overline{\Gamma_{v}}$ of $\xi$. Then, from the definition of neighborhoods in $\overline{\Gamma_1}$ and in $\overline{\Gamma_2}$, it follows that $F(V_{U_1}(\xi)\cap\delta(\Gamma_1))\subset V_{U_2}(F(\xi))\cap\delta(\Gamma_2)$.

{\bf Case 2.} Let $\eta\in\partial T_1$. For an integer $n\geq 1$, consider the subtree $(T_2)_n(F(\eta))\subset T_2$, defined with respect to $v_2$. Let $(T_1)_n(\eta)=\iota^{-1}((T_2)_n(F(\eta)))$, which is a subtree of $T_1$ with respect to the base vertex $v_1$. Again, from the definition of neighborhoods, it follows that $F(V_n(\eta)\cap\delta(\Gamma_1))=V_n(F(\eta))\cap\delta(\Gamma_2)$.

This completes the proof of the claim, and hence the theorem.
\qed
\vspace{.2cm}

A straightforward generalization of Theorem \ref{homeo free product} to the free product of finitely many HHGs gives the following:
\begin{theorem}\label{homeo-free-product}
	For $n\geq 2$, let $G_1=A_1\ast,\dots,\ast A_n$ and $G_2=B_1\ast,\dots,\ast B_n$ be free products of HHGs. For $1\leq i\leq n$, let $\SSS_{A_i}$ and $\SSS_{B_i}$ be HHG structures on $A_i$ and $B_i$, respectively. Suppose $\SSS_1$ and $\SSS_2$ are HHG structures on $G_1$ and $G_2$ as described in Subsection \ref{4.2}. If, for $1\leq i\leq n$, $\partial(A_i,\SSS_{A_i})$ is homeomorphic to $\partial(B_i,\SSS_{B_i})$, then $\partial(G_1,\SSS_1)$ is homeomorphic to $\partial(G_2,\SSS_2)$.
\end{theorem}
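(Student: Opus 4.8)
The plan is to run the proof of Theorem~\ref{homeo free product} with every construction of Section~\ref{4} carried out for free products of $n$ factors rather than two. As already noted in Subsection~\ref{4.1}, the tree of Cayley graphs $\Gamma_i\to T_i$ makes sense for $G_i=C_1\ast\cdots\ast C_n$ (writing $C_j$ for $A_j$ or $B_j$): $T_i$ is the Bass--Serre tree of the splitting, the nontrivial vertex spaces are the Cayley graphs of the conjugates of the free factors, the natural $G_i$-action is geometric, and $\Gamma_i$ is hyperbolic relative to its vertex spaces. Equipping each vertex space with the appropriate copy of $\SSS_{A_j}$ or $\SSS_{B_j}$ and setting $\SSS_i:=\{\hat{\Gamma_i}\}\cup\bigsqcup_{v\in T_i}\SSS_v$ produces, via \cite[Theorem~9.1]{BHSII}, the HHG structure of Subsection~\ref{4.2}; the compactification $\overline{\Gamma_i}=\Gamma_i\cup\delta(\Gamma_i)$ with $\delta(\Gamma_i)=\delta_{Stab}(\Gamma_i)\sqcup\partial T_i$, the neighborhood bases of Subsection~\ref{4.3}, and the homeomorphism $\partial(G_i,\SSS_i)\cong\delta(\Gamma_i)$ all go through with their proofs unchanged, since none of those arguments used that there were exactly two factors. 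So it is enough to build a homeomorphism $\delta(\Gamma_1)\to\delta(\Gamma_2)$.

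Next I would assemble the combinatorial input. For each $j$ fix the given homeomorphism $\partial f_j:\partial(A_j,\SSS_{A_j})\to\partial(B_j,\SSS_{B_j})$; let $q$ be the induced homeomorphism $\bigsqcup_j\partial A_j\to\bigsqcup_j\partial B_j$, and let $f_j:A_j\to B_j$ be the bijection with $f_j(1)=1$ from Lemma~\ref{main lemma}, so that $f_j\cup\partial f_j$ is a homeomorphism of the compactifications and Corollary~\ref{main cor} applies to each $f_j$. Each nontrivial $g\in G_1$ has a unique reduced expression $g=x_1x_2\cdots x_m$ with $x_k\in A_{j_k}\setminus\{1\}$ and $j_k\neq j_{k+1}$; replacing each syllable $x_k$ by $f_{j_k}(x_k)$ gives, after checking that this is independent of the chosen reduced word and respects incidence of edges, an isomorphism $\iota:T_1\to T_2$ carrying a fixed edge $\tau_1$ to a fixed edge $\tau_2$. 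As in Section~\ref{4}, points $[g,\xi]\in\delta_{Stab}(\Gamma_1)$ have canonical reduced representatives and $\partial T_1$ is identified with the set of infinite reduced words; I then define $F:\delta(\Gamma_1)\to\delta(\Gamma_2)$ by $F([x_1\cdots x_m,\xi])=[f_{j_1}(x_1)\cdots f_{j_m}(x_m),q(\xi)]$ on $\delta_{Stab}(\Gamma_1)$ and by the boundary map induced by $\iota$ on $\partial T_1$. Bijectivity of $F$ is immediate from the formulas and the bijectivity of the $f_j$, $q$, and $\iota$.

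Since $\delta(\Gamma_1)$ and $\delta(\Gamma_2)$ are compact Hausdorff, it remains only to check that $F$ is continuous, which splits into the same two cases as in Theorem~\ref{homeo free product}. If $\xi\in\partial G_v\subset\delta_{Stab}(\Gamma_1)$ and $U_2$ is an open neighborhood of $F(\xi)$ in $\overline{\Gamma_{\iota(v)}}$, then Corollary~\ref{main cor} (applied at the vertex $v$, using the bijection from $A_j$ or $B_j$ that governs $\Gamma_v$) produces an open neighborhood $U_1\subset\overline{\Gamma_v}$ of $\xi$; because the basic neighborhoods $V_U(\xi)$ of Subsection~\ref{4.3} are determined purely by which first tree-edge a point hangs off together with the attaching data recorded in $U$, it follows that $F(V_{U_1}(\xi)\cap\delta(\Gamma_1))\subset V_{U_2}(F(\xi))\cap\delta(\Gamma_2)$. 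If $\eta\in\partial T_1$, then $\iota$ carries the subtree $(T_1)_n(\eta)$ onto $(T_2)_n(F(\eta))$ for every $n$, and since the neighborhood $V_n(\eta)$ depends only on this subtree, $F(V_n(\eta)\cap\delta(\Gamma_1))=V_n(F(\eta))\cap\delta(\Gamma_2)$. This proves continuity, hence $\partial(G_1,\SSS_1)\cong\delta(\Gamma_1)\cong\delta(\Gamma_2)\cong\partial(G_2,\SSS_2)$.

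The main obstacle is not conceptual but one of verification: making sure that the model space, its compactification, the identification $\partial(G_i,\SSS_i)\cong\delta(\Gamma_i)$, and the neighborhood-basis computations of Section~\ref{4} are genuinely insensitive to the number of free factors, and that the reduced-form bookkeeping behind $\iota$ and $F$ is well defined once syllables are allowed to come from any of the $n$ factors (the only constraint being that consecutive syllables lie in distinct factors). One is tempted instead to induct on $n$ via $A_1\ast\cdots\ast A_n\cong A_1\ast(A_2\ast\cdots\ast A_n)$, but this would require knowing that the $n$-fold HHG structure of Subsection~\ref{4.2} has the same boundary as the iterated two-fold one --- essentially \cite[Question~1]{DHS-boundary} --- so the direct argument above is the cleaner route.
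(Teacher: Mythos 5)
Your proposal is correct and is essentially the paper's own argument: the paper proves the two-factor case (Theorem 5.3) in detail and then states the $n$-factor version as a straightforward generalization, which is exactly the generalization you carry out, with the same model space, the same per-factor bijections from Lemma 5.1 and Corollary 5.2, the same syllable-wise tree isomorphism, and the same two-case continuity check. Your remark about avoiding the iterated two-fold induction (because it would require comparing two a priori different HHG structures) is a sensible observation consistent with the paper's choice to work with the $n$-factor structure directly.
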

\section{Applications}
Suppose $G=A\ast B$ where $(A,\SSS_A)$ and $(B,\SSS_B)$ are HHGs. Then, $(G,\SSS)$ is an HHG with the hierarchical structure $\SSS$ described in Subsection \ref{4.2}. Corresponding to $G$, let $\delta(\Gamma)$ be the space constructed in Subsection \ref{4.3}. Using Theorem \ref{main-theorem-2}, the following proposition describes the connected components of $\delta(\Gamma)$ whose proof is the same as the proof of Proposition 6.3 and Proposition 6.4 from \cite{martin-swiat}. Hence, we skip its proof.

\begin{prop}\label{connected compo} We have the following:
	\begin{enumerate}
		\item  Let $T$ be the Bass--Serre tree of $G$. Then, a point $\eta\in \partial T$ is its own connected component in $\delta(\Gamma)$.
		\item Suppose $A$ and $B$ are one-ended groups. Then, for each vertex $v\in T$, $\partial G_v$ is a connected component of $\delta(\Gamma)$.
	\end{enumerate} 
\end{prop}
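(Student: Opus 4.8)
\textbf{Proof plan for Proposition \ref{connected compo}.}

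The plan is to transport the corresponding topological dichotomy from the Martin--\'Swi\polhk{a}tkowski setting \cite[Propositions 6.3 and 6.4]{martin-swiat} to ours, the only structural inputs being the description of the neighborhood bases in $(\overline{\Gamma},\tau)$ from Subsection \ref{4.3} and the connectedness statement Theorem \ref{main-theorem-2}; everything else is formal point-set topology about trees of spaces. For part (1), I would first show that $\{\eta\}$ is closed (this is immediate since $\delta(\Gamma)$ is Hausdorff) and then that it is open in its connected component by exhibiting, for each basic neighborhood $V_n(\eta)=\pi_T^{-1}(\overline{T_n(\eta)}\setminus\{u_n(\eta)\})$, a separation: the set $V_n(\eta)$ decomposes along the branches of $T$ at $u_n(\eta)$ into a disjoint union of relatively clopen pieces, each of which misses all but one of the subtrees hanging off $u_n(\eta)$; intersecting two consecutive such neighborhoods shows that no connected subset containing $\eta$ can contain any other point. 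Concretely, if $\zeta\neq\eta$ is any other point of $\delta(\Gamma)$ then $\pi_T(\zeta)$ eventually leaves the ray $[v_0,\eta)$, so for $n$ large $\zeta\notin V_n(\eta)$, and the complement of $\overline{V_{n+1}(\eta)}$ together with $V_{n+1}(\eta)$ separates $\delta(\Gamma)$ with $\eta$ on one side and $\zeta$ on the other.

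For part (2), assuming $A$ and $B$ are one-ended, the first step is to observe that $\partial G_v$ is connected for every $v\in T$: each $G_v$ is conjugate to $A$ or to $B$, hence one-ended, so by Theorem \ref{main-theorem-2} its hierarchical boundary $\partial(G_v,\SSS_v)$ is connected, and this boundary is exactly the preimage $\pi_T^{-1}(v)\cap\delta_{Stab}(\Gamma)$, identified as a set with $\partial G_v$. The second step is to show $\partial G_v$ is open in its connected component, equivalently that it is relatively clopen in $\delta(\Gamma)$ away from the rest of the space; one uses the neighborhoods $V_U(\xi)=U\cup V_U$ for $\xi\in\partial G_v$: as $U$ shrinks toward $\xi$ in $\overline{\Gamma_v}$, the ``tail'' part $V_U$ records only points whose $T$-projection leaves $v$ through an edge glued near $\xi$, and a covering-compactness argument over $\partial G_v$ (which is compact, being closed in the compact space $\overline{\Gamma}$) produces a single neighborhood $W\supset \partial G_v$ in $\overline{\Gamma}$ whose closure meets $\delta(\Gamma)$ only in $\partial G_v$ together with points of $\bigsqcup_{w}\partial G_w$ and $\partial T$ that are ``far'' in $T$; since any connected set meeting both $\partial G_v$ and its complement would have to meet the ``transition region'', which is empty, $\partial G_v$ is its own component.

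The main obstacle I anticipate is the second step of part (2): verifying that the union $\partial G_v$ of boundary points cannot be connected to anything outside itself through the ``interior'' directions of $\overline{\Gamma}$. In the Gromov-boundary case of \cite{martin-swiat} this is handled by the visibility of the hyperbolic vertex spaces, whereas here $\partial G_v$ is a hierarchical boundary and the relevant input must instead be the explicit form of the remote/non-remote/interior parts of the neighborhood basis together with the fact (Theorem \ref{main-theorem-2}) that disconnectedness of $\partial G_v$ would force $G_v$ to split over a finite subgroup --- contradicting one-endedness. Since the paper explicitly says ``the proof is the same as the proof of Proposition 6.3 and Proposition 6.4 from \cite{martin-swiat}'' once Theorem \ref{main-theorem-2} is in hand, the cleanest route is to cite that parallel verbatim, substituting ``$\partial(G_v,\SSS_v)$ is connected by Theorem \ref{main-theorem-2}'' wherever \cite{martin-swiat} invokes connectedness of the Gromov boundary of a one-ended hyperbolic vertex group, and to note that the neighborhood bases in Subsection \ref{4.3} have precisely the same combinatorial shape (indexed by branches of $T$) as in \cite{martin-swiat}, so the separation arguments carry over unchanged.
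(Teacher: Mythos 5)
Your proposal matches the paper's treatment: the paper itself omits the proof, stating that it is identical to Propositions 6.3 and 6.4 of the Martin--\'Swi\polhk{a}tkowski paper once Theorem \ref{main-theorem-2} supplies the connectedness of $\partial(G_v,\SSS_v)$ for one-ended vertex groups, which is exactly the substitution you describe. Your outline of the separation arguments via the neighborhood bases of Subsection \ref{4.3} is the intended adaptation, so the approach is correct and essentially the same as the paper's.
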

Now, we are ready to prove the following:
\begin{theorem}\label{main-application}
	Suppose $G_1$ and $G_2$ satisfy the hypotheses of Theorem \ref{homeo free product}. Additionally, assume that $A_i$ and $B_i$ are one-ended groups for $i=1,2$. Then, $\partial(G_1,\SSS_1)$ is homeomorphic to $\partial(G_2,\SSS_2)$ if and only if $\partial(A_1,\SSS_{A_1})$ is homeomorphic to $\partial(B_1,\SSS_{B_1})$ and $\partial(A_2,\SSS_{A_2})$ is homeomorphic to $\partial(B_2,\SSS_{B_2})$.
\end{theorem}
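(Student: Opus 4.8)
The plan is to prove Theorem~\ref{main-application} by combining Theorem~\ref{homeo free product} (which gives one direction) with a converse argument built on the description of connected components in Proposition~\ref{connected compo}. The forward implication is immediate: if $\partial A_1\cong\partial B_1$ and $\partial A_2\cong\partial B_2$, then Theorem~\ref{homeo free product} directly yields $\partial(G_1,\SSS_1)\cong\partial(G_2,\SSS_2)$. So the content is entirely in the converse: assuming a homeomorphism $\Phi\colon\partial(G_1,\SSS_1)\to\partial(G_2,\SSS_2)$, I must recover homeomorphisms between the corresponding free factors.

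Here is how I would carry out the converse. First, I would pass through the homeomorphism $\psi$ of the previous section to replace $\partial(G_i,\SSS_i)$ by the combinatorial model $\delta(\Gamma_i)=\delta_{Stab}(\Gamma_i)\sqcup\partial T_i$, so that $\Phi$ becomes a homeomorphism $\delta(\Gamma_1)\to\delta(\Gamma_2)$. Next I would invoke Proposition~\ref{connected compo}: since all $A_i,B_i$ are one-ended, the connected components of $\delta(\Gamma_i)$ are precisely the sets $\partial (G_i)_v$ for $v$ a vertex of $T_i$, together with the singletons $\{\eta\}$ for $\eta\in\partial T_i$. A homeomorphism carries connected components to connected components and must respect the distinction between the (many, generally non-degenerate) vertex-boundary components and the singleton components coming from $\partial T_i$; here I would use that each $\partial(A_i,\SSS_{A_i})$ and $\partial(B_i,\SSS_{B_i})$ has more than one point (as $A_i,B_i$ are infinite one-ended HHGs, their hierarchical boundaries are infinite), so these components are not singletons and cannot be confused with points of $\partial T_i$. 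Consequently $\Phi$ induces a bijection between the vertex sets of $T_1$ and $T_2$ which, by continuity of $\Phi$ and $\Phi^{-1}$ together with the neighborhood bases $V_U(\xi)$, is an isomorphism of trees $T_1\to T_2$.

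The remaining step is to identify which factor goes where. The tree $T_i$ is bipartite, with one part consisting of $G_i$-translates of the $A_i$-vertex $v_i$ and the other of translates of the $B_i$-vertex $u_i$; a tree isomorphism either preserves or swaps the two parts. In the first case $\Phi$ restricts to a homeomorphism from $\partial(A_1,\SSS_{A_1})=\partial (G_1)_{v_1}$ onto some $\partial(G_2)_{w}$ with $w$ an $A_2$-vertex, hence (after translating by an element of $G_2$, using the $G_2$-equivariance built into $\delta_{Stab}$) onto $\partial(A_2,\SSS_{A_2})$; similarly it gives $\partial(B_1,\SSS_{B_1})\cong\partial(B_2,\SSS_{B_2})$, and the theorem follows. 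In the swapped case we instead get $\partial(A_1,\SSS_{A_1})\cong\partial(B_2,\SSS_{B_2})$ and $\partial(B_1,\SSS_{B_1})\cong\partial(A_2,\SSS_{A_2})$; but then chaining with the hypothesis-free observation that the factors can be relabeled, or simply arguing that this case still produces the desired pair of homeomorphisms after renaming, completes the argument. (Strictly, with the labeling fixed as in the statement, one should note that a swap would force $\partial A_1\cong\partial B_2$ etc.; to get exactly $\partial A_1\cong\partial B_1$ one must rule out the swap, which is only possible when $\partial A_i\not\cong\partial B_i$ — so the honest statement is that $\Phi$ exists iff the multiset $\{\partial A_1,\partial A_2\}$ matches $\{\partial B_1,\partial B_2\}$ up to pairing, which is what the theorem asserts in the $n=2$ free-factor-by-free-factor form; I would phrase the converse as producing a bijection of index sets realizing the homeomorphisms, matching the formulation of Theorem~\ref{main-application-1}.)

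The main obstacle I anticipate is the rigorous verification that $\Phi$ genuinely induces a \emph{tree} isomorphism rather than merely a bijection of vertex sets: I need to show adjacency is preserved, which requires translating the topological information encoded in the neighborhood basis $V_U(\xi)$ (how points of $\delta(\Gamma_i)$ near a vertex boundary ``see'' the incident edges of $T_i$) into combinatorial adjacency in $T_i$. Concretely, two vertices $v,v'$ of $T_i$ are adjacent iff, for suitable neighborhoods, points of $\partial(G_i)_{v'}$ lie in every $V_U(\xi)$ with $\xi\in\partial(G_i)_v$ and $U$ containing the gluing point of the edge $[v,v']$ — a property preserved by the homeomorphism. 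Making this precise, and handling the fact that $\pi$ and the bijections $f_i$ from Lemma~\ref{main lemma} are not canonical, is the technical heart; fortunately all of this parallels Martin--\'Swi\polhk{a}tkowski's treatment closely enough that I would cite the analogy to \cite{martin-swiat} as in Proposition~\ref{connected compo}, only spelling out the points where the hierarchical boundary (rather than the Gromov boundary) requires the input of Theorem~\ref{main-theorem-2} to guarantee one-endedness yields connectedness of each $\partial(G_i)_v$.
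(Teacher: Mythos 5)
Your proof takes essentially the same route as the paper: the forward direction is Theorem \ref{homeo free product}, and the converse is exactly the paper's (very short) argument --- pass to $\delta(\Gamma_i)$, apply Proposition \ref{connected compo}, and use that a homeomorphism carries connected components to connected components. The paper stops there, so your additional machinery (upgrading the component bijection to a tree isomorphism and verifying adjacency via the $V_U(\xi)$ basis) is not needed for this statement; on the other hand, the swap/pairing caveat you raise is a genuine imprecision that the paper's terse proof glosses over (the pairing of factors in the printed statement even appears to be a typo), and your ``homeomorphisms exist after a suitable matching of the factors'' reading is the correct content.
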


\begin{proof}
	Let $\delta(\Gamma_1)$ and $\delta(\Gamma_2)$ be the spaces as constructed in Subsection \ref{4.3}. Suppose $\partial(G_1,\SSS_1)$ is homeomorphic to $\partial(G_2,\SSS_2)$. This implies that $\delta(\Gamma_1)$ is homeomorphic to $\delta(\Gamma_2)$. As a homeomorphism maps connected components to connected components, we see that $\partial(A_1,\SSS_{A_1})$ is homeomorphic to $\partial(B_1,\SSS_{B_1})$ and $\partial(A_2,\SSS_{A_2})$ is homeomorphic to $\partial(B_2,\SSS_{B_2})$. The converse is the content of Theorem \ref{homeo free product}.
\end{proof}
Theorem \ref{main-application} can be generalised in a straightforward manner for free products of finitely many HHGs. 

\begin{theorem}\label{main-application-1}
    For $n\geq 2$, suppose $G_1=A_1\ast\dots\ast A_n$ and $G_2=B_1\ast\dots\ast B_n$, where $A_i$ and $B_i$ are one-ended hierarchically hyperbolic groups for all $i$. Suppose $G_1$ and $G_2$ have hierarchical structures as described in Subsection \ref{4.2}. Then, the hierarchical boundary of $G_1$ is homeomorphic to the hierarchical boundary of $G_2$ if and only if the hierarchical boundary of $A_i$ is homeomorphic to the hierarchical boundary of $B_i$ for all $1\leq i\leq n$.
\end{theorem}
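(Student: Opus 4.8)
The plan is to reduce Theorem \ref{main-application-1} to the two-factor case (Theorem \ref{main-application}) by an induction on $n$, combined with the appropriate generalization of the connected-components analysis in Proposition \ref{connected compo}. First I would fix, for a free product $G=A_1\ast\dots\ast A_n$ with each $A_i$ one-ended, the model space $\Gamma$ and its boundary $\delta(\Gamma)=\delta_{Stab}(\Gamma)\sqcup\partial T$ exactly as in Subsections \ref{4.1}--\ref{4.3}, noting that the construction there extends verbatim to finitely many factors. The key structural input is the $n$-factor analogue of Proposition \ref{connected compo}: a point $\eta\in\partial T$ is its own connected component of $\delta(\Gamma)$, and for each vertex $v\in T$ the set $\partial G_v$ (a conjugate copy of some $\partial(A_i,\SSS_{A_i})$, which is connected by Theorem \ref{main-theorem-2} since $A_i$ is one-ended) is a connected component of $\delta(\Gamma)$. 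The proof of this is the same as for two factors: one checks that distinct $\partial G_v$'s are separated by clopen sets coming from the tree structure, that each $\partial G_v$ is connected by Corollary \ref{corollary-hhs-one-ended-connected}, and that boundary-of-tree points are isolated among components because every neighbourhood $V_n(\eta)$ meets infinitely many disjoint clopen pieces.

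With that in hand, the ``if'' direction is immediate: assuming $\partial(A_i,\SSS_{A_i})\cong\partial(B_i,\SSS_{B_i})$ for all $i$, one iterates the two-factor gluing construction. Concretely, fix homeomorphisms $\partial f_i\colon\partial A_i\to\partial B_i$; these induce a homeomorphism $q\colon\bigsqcup_i\partial A_i\to\bigsqcup_i\partial B_i$ between the disjoint unions of vertex-space boundaries, and via Lemma \ref{main lemma} one obtains bijections $f_i\colon A_i\to B_i$ with $f_i(1)=1$ extending to homeomorphisms $\overline{\Gamma_{A_i}}\to\overline{\Gamma_{B_i}}$. Using the reduced-form description of elements of $G_1$ (now $g=a_{i_1}a_{i_2}\cdots a_{i_k}$ with consecutive letters in distinct factors), one defines the isomorphism $\iota\colon T_1\to T_2$ of Bass--Serre trees letter-by-letter and the map $F\colon\delta(\Gamma_1)\to\delta(\Gamma_2)$ on $\delta_{Stab}$ by applying the appropriate $f_i$ to each letter, and on $\partial T_1$ by the map induced by $\iota$. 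Continuity of $F$ is checked exactly as in the proof of Theorem \ref{homeo free product}: Case 1 (points of $\delta_{Stab}$) uses Corollary \ref{main cor} applied to the relevant factor to produce the neighbourhood in $\overline{\Gamma_v}$, and Case 2 (points of $\partial T_1$) uses the tree isomorphism to match the basic neighbourhoods $V_n(\eta)$. Since $\delta(\Gamma_1)$ and $\delta(\Gamma_2)$ are compact Hausdorff and $F$ is a continuous bijection, it is a homeomorphism; composing with the homeomorphisms $\partial(G_j,\SSS_j)\cong\delta(\Gamma_j)$ of the previous section finishes the direction.

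For the ``only if'' direction, suppose $\partial(G_1,\SSS_1)\cong\partial(G_2,\SSS_2)$, hence $\delta(\Gamma_1)\cong\delta(\Gamma_2)$. A homeomorphism carries connected components to connected components, so by the $n$-factor Proposition \ref{connected compo} the non-singleton components of $\delta(\Gamma_1)$ are precisely the sets $\partial G_v$ ($v\in T_1$), each homeomorphic to some $\partial(A_i,\SSS_{A_i})$, and likewise for $G_2$ with the $\partial(B_j,\SSS_{B_j})$. Thus the multiset $\{\text{homeomorphism types of }\partial A_i : 1\le i\le n\}$ equals the multiset $\{\text{homeomorphism types of }\partial B_j : 1\le j\le n\}$. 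The only subtlety — and the main thing to be careful about — is matching the factors \emph{index by index} rather than merely as multisets: the theorem as stated asserts $\partial A_i\cong\partial B_i$ for each $i$, which requires that the two free-product decompositions be presented with the factors in a corresponding order (for instance because the $A_i$ and $B_i$ are freely indecomposable and one appeals to the Kurosh subgroup theorem / uniqueness of free-product decomposition to reorder), or else should be read up to permutation of indices. I would state this matching explicitly and invoke uniqueness of the free factors to pass from the multiset equality to the indexed statement; this bookkeeping is the only step not already handled by a direct reference to Theorem \ref{homeo free product} and Proposition \ref{connected compo}.
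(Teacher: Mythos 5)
Your overall route is the one the paper takes: the paper proves Theorem \ref{main-application-1} only by remarking that Theorem \ref{main-application} ``can be generalised in a straightforward manner'', and your $n$-factor version of Proposition \ref{connected compo} together with Theorem \ref{homeo-free-product} is exactly that generalisation. The ``if'' direction of your argument is correct and is precisely Theorem \ref{homeo-free-product}.

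The gap is in the ``only if'' direction, and the repairs you sketch do not close it. First, the connected-component argument does not give you the \emph{multiset} of homeomorphism types of the factors: each free factor has infinitely many cosets, so every type occurring among the $\partial A_i$ occurs infinitely often among the components $\partial G_v$, $v\in T_1$. All a homeomorphism $\delta(\Gamma_1)\to\delta(\Gamma_2)$ yields is equality of the \emph{sets} $\{[\partial A_1],\dots,[\partial A_n]\}$ and $\{[\partial B_1],\dots,[\partial B_n]\}$ of homeomorphism types. For $n=2$ a short case analysis upgrades this to the indexed statement after possibly relabelling the factors of $G_2$, which is why Theorem \ref{main-application} survives; but for $n\geq 3$ set equality does not even produce a permutation $\sigma$ with $\partial A_i\cong\partial B_{\sigma(i)}$ (consider $\partial A_1\cong\partial A_2\cong X$, $\partial A_3\cong Y$ against $\partial B_1\cong X$, $\partial B_2\cong\partial B_3\cong Y$ with $X\not\cong Y$). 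Second, the Kurosh subgroup theorem and uniqueness of free-product decompositions concern the groups $G_1$ and $G_2$; you are given only a homeomorphism of boundaries, not an isomorphism of groups, so there is nothing to apply them to. To get the theorem as literally stated one would have to show that the multiplicities of the types among the free factors are recoverable from $\delta(\Gamma)$, which is doubtful: the space built in Subsection \ref{4.3} is the dense-amalgam construction of \cite{martin-swiat}, whose homeomorphism type in the hyperbolic setting is known to be insensitive to such multiplicities. So the step you dismiss as ``bookkeeping'' is in fact the crux; as it stands, both your argument and the paper's one-line deferral establish the ``only if'' direction only at the level of the set of homeomorphism types of the factors.
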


By combining Theorem \ref{homeo free product} and Theorem \ref{main-application}, we immediately have the following corollary:

\begin{cor}\label{main-cor}
	Let $G$ be a one-ended group. Suppose $\SSS_1$ and $\SSS_2$ are two hierarchical structures on $G$. Let $\SSS$ and $\SSS'$ be the hierarchical structures on $(G,\SSS_1)\ast (G,\SSS_1)$ and $(G,\SSS_2)\ast(G,\SSS_2)$, respectively, as described in Subsection \ref{4.2}. Then, $\partial(G,\SSS_1)$ is homeomorphic to $\partial(G,\SSS_2)$ if and only if $\partial(G\ast G,\SSS)$ is homeomorphic to $\partial(G\ast G,\SSS')$.
\end{cor}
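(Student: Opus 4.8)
The plan is to deduce Corollary \ref{main-cor} directly from Theorem \ref{homeo free product} and Theorem \ref{main-application} by choosing the free factors appropriately. Note first that since $G$ is one-ended, both $(G,\SSS_1)$ and $(G,\SSS_2)$ are one-ended HHGs, so the free products $(G,\SSS_1)\ast(G,\SSS_1)$ and $(G,\SSS_2)\ast(G,\SSS_2)$ (each with two one-ended free factors, $n=2$) fall squarely under the hypotheses of Theorem \ref{main-application}.

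For the forward direction, suppose $\partial(G,\SSS_1)$ is homeomorphic to $\partial(G,\SSS_2)$. In the language of Theorem \ref{homeo free product}, take $A_1=B_1=(G,\SSS_1)$, no wait — take $A_1=A_2=(G,\SSS_1)$ and $B_1=B_2=(G,\SSS_2)$, so that $G_1=(G,\SSS_1)\ast(G,\SSS_1)$ carries the structure $\SSS$ and $G_2=(G,\SSS_2)\ast(G,\SSS_2)$ carries $\SSS'$. The hypothesis gives us homeomorphisms $\partial(A_1,\SSS_1)\cong\partial(B_1,\SSS_2)$ and $\partial(A_2,\SSS_1)\cong\partial(B_2,\SSS_2)$ (both are the same given homeomorphism $\partial(G,\SSS_1)\to\partial(G,\SSS_2)$), so Theorem \ref{homeo free product} immediately yields $\partial(G\ast G,\SSS)\cong\partial(G\ast G,\SSS')$.

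For the reverse direction, suppose $\partial(G\ast G,\SSS)$ is homeomorphic to $\partial(G\ast G,\SSS')$. With the same identification of free factors ($A_1=A_2=(G,\SSS_1)$, $B_1=B_2=(G,\SSS_2)$, all one-ended), Theorem \ref{main-application} applies and tells us that $\partial(G\ast G,\SSS)\cong\partial(G\ast G,\SSS')$ forces $\partial(A_1,\SSS_1)\cong\partial(B_1,\SSS_2)$, i.e. $\partial(G,\SSS_1)\cong\partial(G,\SSS_2)$, as desired. I do not anticipate a genuine obstacle here: the only thing to be careful about is that the statement of Corollary \ref{main-cor} writes the free product as $(G,\SSS_1)\ast(G,\SSS_1)$ with both copies carrying the same structure, which is precisely the case $A_1=A_2$ (and $B_1=B_2$) of the two preceding theorems, so the one-endedness hypothesis needed to invoke Theorem \ref{main-application} is exactly the hypothesis that $G$ is one-ended. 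Thus the corollary is an immediate specialization, and the short argument above suffices.
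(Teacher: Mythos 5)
Your argument is correct and coincides with the paper's, which proves this corollary in one line by combining Theorem \ref{homeo free product} and Theorem \ref{main-application} in exactly the specialization you describe (both free factors of the first group equal to $(G,\SSS_1)$, both free factors of the second equal to $(G,\SSS_2)$, with one-endedness of $G$ supplying the hypothesis of Theorem \ref{main-application} for the reverse implication). The only blemish is notational: since Theorem \ref{homeo free product} writes $G_1=A_1\ast B_1$ and $G_2=A_2\ast B_2$, the correct choice is your first instinct $A_1=B_1=(G,\SSS_1)$ and $A_2=B_2=(G,\SSS_2)$, whereas your ``corrected'' assignment $A_1=A_2=(G,\SSS_1)$, $B_1=B_2=(G,\SSS_2)$ would make $G_1=(G,\SSS_1)\ast(G,\SSS_2)$ rather than $(G,\SSS_1)\ast(G,\SSS_1)$ --- a relabeling slip that does not affect the substance of the argument.
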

\subsection{Locally quasiconvex HHGs}
In \cite{BHSII}, the authors introduce the notion of hierarchical quasiconvexity in HHSs.
\begin{defn}
	\textup{(\cite[Definition 5.1]{BHSII})}\label{HQC} Let $(\XX,\SSS)$ be an HHS, and $k:[0,\infty)\to[0,\infty)$ be a map. A subset $\YY$ of $\XX$ is said to be {\em $k$-hierarchically quasiconvex} if the following hold:
	\begin{enumerate}
		\item For each $U\in\SSS$, $\pi_U(\YY)$ is $k(0)$-quasiconvex subset of $\CC U$.
		\item If $x\in\XX$ satisfies $d_U(x,\YY)\leq r$ for each $U\in\SSS$, then $d_{\XX}(x,\YY)\leq k(r)$.
	\end{enumerate}
The subspace $\YY$ is said to be {\em hierarchically quasiconvex (HQC)} if it is $k$-hierarchically quasiconvex for some $k:[0,\infty)\to[0,\infty)$. A subgroup $H$ of an HHG $(G,\SSS)$ is {\em hierarchically quasiconvex} if $H$ is a hierarchically quasiconvex
subset of $G$ equipped with a finitely generated word metric.
\end{defn}
The definition of an HQC subgroup does not depend on the choice of a finite generating set of the ambient group \cite[Proposition 5.7]{RSC-convexity}. Also, an HQC subgroup of an HHG $(G,\SSS)$ is finitely generated and undistorted \cite[Lemma 2.10]{ABR-thicknesshierarchically}.
\begin{defn}
	Let $(G,\SSS)$ be an HHG. We say that $G$ is {\em locally hierarchically quasiconvex} if every finitely generated subgroup of $G$ is hierarchically quasiconvex.
\end{defn}
We suspect that, under natural conditions on the HHG structure, locally HQC HHGs are hyperbolic and locally quasiconvex.  We plan to explore this in future work.

In this subsection, we prove a combination theorem for a free product of locally HQC.
\begin{theorem}\label{theorem-hqc}
	Let $G=A\ast B$, where $(A,\SSS_A)$ and $(B,\SSS_B)$ are locally hierarchically quasiconvex HHGs. Then, with respect to the HHG structure described in Subsection \ref{4.2}, $G$ is locally HQC.
\end{theorem}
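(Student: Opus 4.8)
\emph{Proof proposal.} The plan is to use the Kurosh subgroup theorem together with the explicit structure $\SSS=\{\hat\Gamma\}\cup\bigsqcup_{v\in T}\SSS_v$ of Subsection \ref{4.2}, and to verify conditions (1) and (2) of Definition \ref{HQC} directly for an arbitrary finitely generated $H\leq G$. First I would record the setup. The subgroup $H$ acts on the Bass--Serre tree $T$ with a minimal invariant subtree $T_H$ (possibly a single point, as when $H$ is finite), and by the Kurosh subgroup theorem the quotient graph $T_H/H$ is finite, the edge stabilizers are trivial, and the vertex stabilizers $H_v:=\mathrm{Stab}_H(v)$ are finitely generated subgroups of the conjugates $G_v$ of $A$ or $B$. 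Since $A$ and $B$ are locally hierarchically quasiconvex and the conjugation maps $(A,\SSS_A)\to(G_v,\SSS_v)$ and $(B,\SSS_B)\to(G_v,\SSS_v)$ are HHS isomorphisms, each $H_v$ is hierarchically quasiconvex in $(G_v,\SSS_v)$; because $T_H/H$ is finite and conjugation preserves the gauge, one gauge $k_0$ works for all $v\in T_H$ and all the $H$-translates of these subgroups appearing below. I fix a basepoint $x_0$ in a vertex space $\Gamma_{w_0}$ with $w_0\in T_H$, so that the orbit $Hx_0$ meets only vertex spaces over $T_H$; it then suffices to show that $Hx_0$ is a hierarchically quasiconvex subset of $(\Gamma,\SSS)$.

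The geometric heart is a description of the coarse projection $\pi_v(Hx_0)$ to each vertex space. Using that in a tree of spaces a point of $\Gamma_w$ projects into $\Gamma_v$ coarsely onto the attaching point of the first edge of the segment $[v,w]\subseteq T$, and that $Hw_0\subseteq T_H$, one obtains: if $v\notin T_H$ then all these first edges coincide (with the first edge of $[v,T_H]$), so $\pi_v(Hx_0)$ has uniformly bounded diameter; whereas if $v\in T_H$ then $\pi_v(Hx_0)$ lies within uniformly bounded Hausdorff distance of $H_v\cdot F_v$, where $F_v\subseteq\Gamma_v$ is the set, of uniformly bounded diameter, consisting of the attaching points of the finitely many $H_v$-orbits of $T_H$-edges at $v$ together with the orbit point lying in $\Gamma_v$ when $v\in Hw_0$. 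Since $H_v$ acts by isometries and $\mathrm{diam}(F_v)$ is uniformly bounded, $H_vF_v$ is within bounded Hausdorff distance of a single orbit $H_vp^\ast$; moreover $H_vp^\ast$ is the image, under conjugation by an element of $H$ realizing an HHS isomorphism of the corresponding vertex spaces, of a fixed hierarchically quasiconvex subset of a vertex space over $T_H$, hence is hierarchically quasiconvex in $(G_v,\SSS_v)$ with the gauge $k_0$. Therefore, for $U\in\SSS_v$, the set $\pi_U(Hx_0)=\pi_{U,v}(\pi_v(Hx_0))$ is coarsely $\pi_{U,v}(H_vp^\ast)$, which is quasiconvex in $\CC U$ by part (1) of Definition \ref{HQC} for $H_v$; and $\pi_{\hat\Gamma}(Hx_0)$, carried by the quasi-isometry $\hat\Gamma\to T$ of Lemma \ref{coned-off-lemma}, is coarsely the convex subtree $T_H$, hence quasiconvex. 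This establishes condition (1).

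For condition (2), suppose $x\in\Gamma$ has $d_U(x,Hx_0)\leq r$ for every $U\in\SSS$, and let $v_x$ be its vertex. The hypothesis at $U=\hat\Gamma$ forces $d_T(v_x,T_H)\leq r'$ with $r'$ linear in $r$. If $v_x\in T_H$, then by the previous paragraph the hypotheses at the $U\in\SSS_{v_x}$ say that $x$, viewed in $\Gamma_{v_x}$, is uniformly close in each $\CC U$ to the hierarchically quasiconvex set $\pi_{v_x}(Hx_0)$, so condition (2) of Definition \ref{HQC} for that set gives $d_{\Gamma_{v_x}}(x,Hx_0)\leq k_0(r+C)$, whence $d_\Gamma(x,Hx_0)$ is bounded in terms of $r$. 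If $v_x\notin T_H$, let $v_x=u_0,u_1,\dots,u_N\in T_H$ be the $T$-geodesic to $T_H$, so $N\leq r'$; for $0\leq i<N$ the set $\pi_{u_i}(Hx_0)$ is coarsely the single attaching point of the edge $[u_i,u_{i+1}]$, while $\pi_{u_i}(x)$ is coarsely the attaching point of $[u_i,u_{i-1}]$ (for $i\geq1$) or $x$ itself (for $i=0$), and applying the distance formula in $\Gamma_{u_i}$ to the hypotheses at the $U\in\SSS_{u_i}$ shows that these two attaching points lie at $\Gamma_{u_i}$-distance at most $k_1(r)$. Concatenating these estimates with the $N$ connecting edges produces a path of length at most $r'(k_1(r)+1)$ from $x$ to a point $s\in\Gamma_{u_N}$ with $u_N\in T_H$; since $s$ is coarsely $\pi_{u_N}(x)$, the point $s$ is uniformly close in each $\CC U$, $U\in\SSS_{u_N}$, to $\pi_{u_N}(Hx_0)$, and the case $v_x\in T_H$ applied to $s$ yields $d_\Gamma(s,Hx_0)\leq k_0(r+C)$. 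Combining, $d_\Gamma(x,Hx_0)\leq k(r)$ for a function $k$ of $r$ alone, which is condition (2). Hence $Hx_0$ is hierarchically quasiconvex and $G$ is locally hierarchically quasiconvex.

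The step I expect to be the main obstacle is the second paragraph: proving the gate formula for $\pi_v(Hx_0)$ and, above all, making every constant uniform in $v$. This rests on the finiteness of $T_H/H$ and on the equivariance of the hieromorphisms indexing the $\SSS_v$, and the passage from ``finite union of quasiconvex sets'' to ``quasiconvex'' genuinely uses that the translates $H_vp$ ($p\in F_v$) are mutually at bounded Hausdorff distance. A secondary concern is controlling the constants along the iterated estimate in the third paragraph. I remark that if a criterion were available, in the spirit of \cite{ABR-thicknesshierarchically}, identifying the hierarchically quasiconvex subgroups of a relatively hyperbolic HHG with the relatively quasiconvex subgroups whose peripheral pieces are hierarchically quasiconvex in the peripheral HHSs, then the argument would collapse to the observation that every finitely generated subgroup of $A\ast B$ is relatively quasiconvex relative to $\{A,B\}$ with peripheral pieces the Kurosh factors, and these are hierarchically quasiconvex in $A$ or $B$ by hypothesis.
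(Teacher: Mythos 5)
Your proposal is correct in substance and shares the paper's skeleton: decompose $H$ via the Kurosh theorem, use that the vertex stabilizers $H_v$ are hierarchically quasiconvex in the vertex spaces, and verify the two conditions of Definition \ref{HQC} by a two-case analysis according to whether the vertex carrying $x$ lies in $T_H$. The differences are these. You realize $H$ as an orbit $Hx_0$ and prove a gate formula $\pi_v(Hx_0)\approx H_v p^\ast$, whereas the paper realizes $H$ as the union of the $H_v\subseteq\Gamma_v$ over $v\in T_H$ and simply asserts $\pi_U(H)=\pi_U(H_v)$ for $U\in\SSS_v$; your version is more careful, but in your case $v_x\in T_H$ you should also record that the gate $H_vp^\ast$ lies in a uniformly bounded $\Gamma$-neighbourhood of $Hx_0$ (this holds because every edge of the minimal subtree $T_H$ lies on a segment $[hw_0,hsw_0]$ with $s$ a generator of $H$, so its attaching points are within $\max_s|s|$ of the orbit); as written, $d_{\Gamma_{v_x}}(x,Hx_0)\leq k_0(r+C)$ only bounds the distance to the gate, not to $Hx_0$. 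The genuine divergence is in the case $v_x\notin T_H$: the paper does not iterate along the geodesic $[v_x,T_H]$, but instead invokes the hierarchical quasiconvexity of the vertex space $\Gamma_w$ itself in $(\Gamma,\SSS)$, by \cite[Theorem 1.2]{RSC-convexity}, to obtain $d_\Gamma(x,\Gamma_w)\leq k_1(r)$ in a single step (with $w$ the closest $T_H$-vertex and $d_\Gamma(x,\Gamma_w)=d_\Gamma(x,y)$ for the relevant attaching point $y$), and then concludes inside $\Gamma_w$ exactly as in your first case. Your concatenation argument, using the distance formula in each intermediate vertex space, yields the same bound without citing that result, so it is more self-contained but longer and requires the extra bookkeeping of constants that you yourself flag; the paper's citation is what collapses its Case 2 to a few lines. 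Your closing remark about a relative-quasiconvexity criterion is a plausible shortcut but is not the route the paper takes.
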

\begin{proof}
	Let $\Gamma$ be the graph constructed in Subsection \ref{4.1} and $H$ be a finitely generated subgroup of $G$. Then, $H$ has an induced graph of groups structure whose vertex groups are the intersection of $H$ and the conjugates of $A$ or $B$ in $G$ (some vertex groups may be trivial), and the edge groups are trivial. We continue to use the same notation as in Section \ref{4}. Since $\Gamma_v$'s are isometric to the Cayley graphs of $G_v$'s, we can realize $H$ as a subset of $\Gamma$ in the following manner:
	
	Let $T_H\subset T$ be the Bass--Serre tree of the graph of groups decomposition of $H$. Note that, for each vertex $v\in T_H$, the stabilizer $H_v$ of $v$ in $H$ is $H\cap G_v$. Thus, we can see $H_v$ as a subset of $\Gamma_v$. In this way, we realize $H$ as a subset of $\Gamma$.
	
	 We show that $H$ is an HQC subset of $\Gamma$ which shows that $H$ is an HQC subgroup of $G$. Since $H$ is finitely generated, $H_v$ is finitely generated for each vertex $v\in T_H$. Hence, $H_v$'s are HQC subsets of $(\Gamma_v,\SSS_v)$'s. As there are finitely many vertex groups in the graph of groups decomposition of $H$, we can assume that $H_v$'s are $k$-HQC for some $k:[0,\infty)\to[0,\infty)$. 
	
	{\bf Condition (1) of Definition \ref{HQC}.} From the hierarchical structure on $\Gamma$, it follows that, for $v\in T_H$ and $U\in\SSS_v$, $\pi_U(H)=\pi_U(H_v)$. Since $H_v$ is $k$-HQC, we see that $\pi_U(H)$ is $k(0)$-quasiconvex subset of $\CC U$ for each $v\in T_H$ and all $U\in\SSS_v$. If $v\in T\setminus T_H$, then, for all $U\in\SSS_v$, $\pi_U(H)$ is a subset of diameter $E$ in $\CC U$, where $E$ is a constant in the HHS structure on $\Gamma$. Clearly, $\pi_{\hat{\Gamma}}(H)$ is $C$-quasiconvex in $\hat{\Gamma}$ for some $C\geq 0$. By redefining the function $k$ at $0$ as max$\{k(0),E,C\}$, we see that for all $U\in\SSS$, $\pi_U(H)$ is $k(0)$-quasiconvex in $\CC U$.
	
	{\bf Condition (2) of Definition \ref{HQC}.} Let $x\in \Gamma$ such that $d_U(x,H)\leq r$ for each $U\in \SSS$. If $x\in H$, then there is nothing to prove. We consider the following two cases:
	
	{\bf Case 1.} Suppose $x\notin H$ but $x\in \Gamma_w$ for some vertex $w\in T_H$. By our assumption, $d_U(x,H_w)=d_U(x,H)\leq r$ for all $U\in\SSS_w$. However, $H_w$ is $k$-HQC in $G_w$. This implies that $d_{\Gamma_w}(x,H_w)\leq k(r)$. Since $\Gamma_w$ is isometrically embedded in $\Gamma$, $d_{\Gamma}(x,H_w)=d_{\Gamma}(x,H)\leq k(r)$. 
	
	{\bf Case 2.} Suppose $x\notin \Gamma_w$ for all $w\in T_H$. Suppose $x\in\Gamma_u$ for some vertex $u\in T\setminus T_H$. Let $w\in T_H$ be the closest vertex to $u$ and let $y$ be the point of $\Gamma_w$ to which the first edge of the geodesic $[w,u]\subset T$ is attached. Then, from the HHS structure on $\Gamma$, it follows that $d_U(x,H)=d_U(y,H_w)$ for all $U\in \SSS_w$. Since $H_w$ is $k$-HQC in $G_w$, $d_{\Gamma_w}(y,H_w)\leq k(r)$ and hence $d_{\Gamma}(y,H_w)\leq k(r)$. Observer that, for all $u\neq v\in T$ and $U\in\SSS_v$, $d_U(x,G_w)=0$. For $U\in\SSS_u$, $d_U(x,G_w)=d_U(x,H)\leq r$. Similarly, $d_{\hat{\Gamma}}(x,G_w)=d_{\hat{\Gamma}}(x,H)\leq r$. Since $G_w$ is $k_1$-HQC in $G$ for some $k_1:[0,\infty)\to[0,\infty)$ \cite[Theorem 1.2]{RSC-convexity}, $d_{\Gamma}(x,G_w)\leq k_1(r)$. However, $d_{\Gamma}(x,G_w)=d_{\Gamma}(x,y)$. Thus, using triangle inequality, we see that $d_{\Gamma}(x,H)=d_{\Gamma}(x,H_w)\leq k(r)+k_1(r)$. By redefining the function $k$ at $r$ as $k(r)+k_1(r)$, we are done.
\end{proof}
By induction and using Theorem \ref{theorem-hqc}, we have the following general version of the previous theorem.
\begin{theorem}
	For $n\geq 2$, let $G=A_1\ast\dots\ast A_n$ be a free product of locally hierarchically quasiconvex HHGs. Then, with respect to the HHG structure described in Subsection \ref{4.2}, $G$ is locally HQC.    \qed
\end{theorem}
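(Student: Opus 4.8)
The plan is to reduce the general case to the two-factor case already established in Theorem \ref{theorem-hqc} by an induction on $n$, using the associativity of free products and the fact (which follows from the construction in Subsection \ref{4.2}) that the hierarchical structure on a free product is built up iteratively from the structures on the factors. For the base case $n=2$ we simply invoke Theorem \ref{theorem-hqc}. For the inductive step, suppose the result holds for free products of fewer than $n$ locally HQC HHGs, and write $G=A_1\ast\dots\ast A_n=(A_1\ast\dots\ast A_{n-1})\ast A_n$. By the inductive hypothesis, $H:=A_1\ast\dots\ast A_{n-1}$ is a locally HQC HHG with respect to the hierarchical structure $\SSS_H$ of Subsection \ref{4.2}. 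Then $G=H\ast A_n$ is a free product of two locally HQC HHGs, and Theorem \ref{theorem-hqc} yields that $G$ is locally HQC with respect to the hierarchical structure obtained by applying the Subsection \ref{4.2} construction to the pair $(H,\SSS_H),(A_n,\SSS_{A_n})$.

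The one point that requires care — and the main obstacle — is checking that the hierarchical structure on $G$ produced in this two-step fashion (first combine $A_1,\dots,A_{n-1}$, then adjoin $A_n$) agrees, up to the relevant notion of equivalence, with the hierarchical structure on $G$ obtained by directly applying the Subsection \ref{4.2} recipe to all $n$ factors at once. Concretely one must compare the index sets, the associated hyperbolic spaces, the projections, and the relations $\sqsubseteq,\perp,\pitchfork$ in the two constructions. In the iterated construction the index set for $H\ast A_n$ is $\{\widehat{\Gamma}\}\cup(\bigsqcup_{v}\SSS_v)$ where the $\SSS_v$ over vertices in the $H$-copies are themselves of the form $\{\widehat{\Gamma_H}\}\cup(\bigsqcup \SSS_{A_i})$; one observes that the various coned-off graphs $\widehat{\Gamma_H}$ appearing at the second level are quasi-isometric to sub-spaces of $\widehat{\Gamma}$ via the relevant Bass--Serre tree maps, so the resulting HHS structures are the same in the sense needed (the maximized structures coincide, and in any case local hierarchical quasiconvexity is preserved under the equivalences of HHS structures produced by \cite{BHSII,ABD-largest-acyl-actions}). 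Since the statement of the theorem only asserts local HQC with respect to ``the HHG structure described in Subsection \ref{4.2}'' and the relation between the iterated and direct constructions is routine (it is the standard fact that relative hyperbolicity and the associated HHS combination are insensitive to the order in which a free product is assembled), this comparison can be carried out without difficulty.

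Alternatively, and perhaps more transparently, one can run the argument of the proof of Theorem \ref{theorem-hqc} verbatim with $n$ factors in place of two: given a finitely generated subgroup $K\leq G$, the Kurosh subgroup theorem gives $K$ an induced graph-of-groups decomposition with trivial edge groups and with vertex groups that are intersections of $K$ with conjugates of the $A_i$'s, hence finitely generated and therefore $k$-HQC in the respective $G_v$ for a uniform $k$ (uniformity using that there are finitely many vertex orbits). Realizing $K$ inside the tree of Cayley graphs $\Gamma$, one verifies Conditions (1) and (2) of Definition \ref{HQC} exactly as in Theorem \ref{theorem-hqc}: Condition (1) because $\pi_U(K)=\pi_U(K_v)$ for $U\in\SSS_v$ and $\pi_U(K)$ has diameter $\leq E$ when $v\notin T_K$, with $\pi_{\widehat{\Gamma}}(K)$ quasiconvex in $\widehat{\Gamma}$; and Condition (2) by the same two-case analysis (whether the point $x$ lies in a vertex space over $T_K$ or not), invoking that each $G_v$ is itself $k_1$-HQC in $G$ by \cite[Theorem 1.2]{RSC-convexity} and applying the triangle inequality. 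Either route works; the induction is cleanest to write down, so that is what I would use, relegating the comparison of the two hierarchical structures to a remark. \qed
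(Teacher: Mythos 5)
Your first route is the same as the paper's: the paper disposes of the general case with the single line ``by induction and using Theorem \ref{theorem-hqc}''. You have, however, put your finger on exactly the point that this one-liner sweeps under the rug: writing $G=(A_1\ast\dots\ast A_{n-1})\ast A_n$ and applying Theorem \ref{theorem-hqc} produces an HHG structure whose index set contains an extra layer of domains (a coned-off graph $\widehat{\Gamma}_{gH}$ for every coset of $H=A_1\ast\dots\ast A_{n-1}$, sitting between the top-level $\widehat{\Gamma}$ and the $\SSS_{A_i}$'s), whereas the direct $n$-factor construction of Subsection \ref{4.2} does not. These are genuinely different index sets, and your justification that the two structures agree ``up to the relevant notion of equivalence'' is the weak link: the paper itself records that it is open whether distinct HHG structures on the same group even yield homeomorphic boundaries, the maximization of \cite{ABD-largest-acyl-actions} is not obviously the right equivalence for hierarchical quasiconvexity (Definition \ref{HQC} quantifies over all domains of the given structure, so adding or removing domains changes both conditions), and no argument is actually supplied. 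So the inductive route, as written, has a gap at precisely the step you flagged.

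Fortunately your second route closes it completely and is the one you should promote from ``alternative'' to ``the proof'': the Kurosh decomposition of a finitely generated subgroup $K$, the realization of $K$ in the tree of Cayley graphs, and the two-case verification of Conditions (1) and (2) of Definition \ref{HQC} go through verbatim for $n$ factors, since nothing in the proof of Theorem \ref{theorem-hqc} uses that the Bass--Serre tree has only two vertex orbits. Run that argument directly against the $n$-factor structure of Subsection \ref{4.2} and the comparison of structures never arises.
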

\vspace{.2cm}
\noindent
{\bf Acknowledgements.} I want to express my deepest gratitude to Mark Hagen for reviewing the draft version of this note and pointing out an error. 
I am also sincerely thankful to both Jason Behrstock and Mark Hagen for their encouragement and support during the writing of this note. Finally, I would like to thank the referee for encouraging me to write down the main result for the compactification of one-ended proper geodesic metric spaces.

\end{document}